\DeclareMathOperator{\Length}{Length}
\DeclareMathOperator{\Vol}{Vol}
\renewcommand{\star}{{\rm star}\, }
\def\qed{\hfill $\Box$}
\def\<{\langle}
\def\>{\rangle}
\def\wt{\widetilde}
\def\sm{\setminus}
\def\vp{\varphi}
\def\C{\mathbb{C}}
\def\R{\mathbb{R}}
\def\Z{\mathbb{Z}}
\def\a{\alpha}
\theoremstyle{definition}
\newtheorem{thm}{Theorem}[section]
\newtheorem{prop}[thm]{Proposition}
\newtheorem{cor}[thm]{Corollary}
\newtheorem{lem}[thm]{Lemma}
\newtheorem{ex}[thm]{Example}
\newtheorem{rmk}{Remark}
\newtheorem{rmks}{Remarks}
\newtheorem{prf}{Proof}
\newtheorem{defn}[thm]{Definition}
\title{Stability problem of equilibrium discrete planar curves}
\author{Yoshiki Jikumaru
\footnote{
Key words and phrases: discrete planar curve, variational problem, discrete curvature, Steiner formula, stability}}
\date{}
\begin{document}
\maketitle

\begin{abstract}
In this paper, we study planar polygonal curves from the variational methods.
We show an unified interpretation of discrete curvatures and the Steiner-type formula by extracting the notion of the discrete curvature vector from the first variation of the length functional.
Moreover, we determine the equilibrium curves for the length functional under the area-constraint condition and study their stability.
\end{abstract}


\section{Introduction}

Discrete differential geometry is an active research field in connection with the smooth manifold theory and the  visualization on the computer.
In this field, from the theoretical point of view, one approach is based on the variational problems and the other is the integrability of equations.
We take the former approach and focus on discrete planar curves.
Surprisingly, such a discrete curve theory is not well-understood, therefore we try to develop them inspired by the work \cite{Pol2002}.
Moreover, we consider the stability problem for equilibrium discrete curves.
Stability problem for discrete objects have not developed well after the work by Polthier and Rossman \cite{PoRo2002}.

First we derive the first variation formula and extract the vector from the formula which should be called the ``discrete curvature vector'' in \S \ref{sec:FirstVariation}.
In \S \ref{sec:RelationDiscreteCurvature}, by using this curvature vector, we show an unifed interpretation which derives various kinds of discrete curvature notions introduced in \cite{Hat2019}, \cite{Hof2008}.
The important viewpoint here is that there is no natural notion of the line element on the vertices.
In \S \ref{sec:DiscreteCMCCurves}, we characterize equilibrium curves for the length functional under the area-constraint condition as regular polygons.
In \S \ref{sec:SteinerFormula}, we derive the Steiner-type formula for parallel curves by using the ``vertex normal'' constructed from the ``discrete curvature vector'' derived in \S \ref{sec:FirstVariation}.
In \S \ref{sec:SecondVariationCurves}, we will consider the stability problem of the regular polygons.
We can derive the second variation formula similar to the smooth case by decomposing the variation vector field to the ``normal'' and ``tangential'' directions as in \cite{BadoC1984}.
Moreover, we show the instability for the non-convex regular polygons by using the second variation formula for ``normal'' variations in the section \ref{sec:InstabilityOfRegularPolygons}.


\section{The first variation formula}
\label{sec:FirstVariation}

In this section we will consider the variation of discrete curves and extract the ``curvature vector'' from the first variation formula.
We can expect that this vector gives a notion of the curvature and normal at vertices.
Let us recall the basic definition of discrete planar curves.
\begin{defn}
Let $n$ be a non-negative integer.
A {\em standard (abstract) $n$-path} is a simplicial complex $G = (V, E)$ formed by
\begin{enumerate}
\item
$(n+1)$ abstract points : $V = \{ v_0, \ldots, v_n \}$.
\item
the set of $n$ edges $e_k = [v_k, v_{k+1}]$, $k = 0, \ldots, n-1$.
\end{enumerate}
An {\em standard abstract $n$-circle} is the union of a standard $n$-path and the ``final'' edge $e_n = [v_n, v_0]$.
A {\em discrete (planar) curve} is a geometric realization of a standard $(n-1)$-path or $(n-1)$-circle which is a map $X : V \to \R^2$ satisfies $l_k := |p_{k+1} - p_k| \neq 0$ for all $k = 0,1,\ldots,n-1$, where we denote $p_k := X(v_k)$.
We denote such a discrete curve as $\Gamma_h = \{ p_k \}_k$.
\end{defn}
For each oriented edge $e_k := [p_k, p_{k+1}]$ of $\Gamma_h$ we can assign an unit normal vector
\[
\nu_k 
:= R \left( \frac{p_{k+1} - p_k}{l_k} \right)
:= R \left( \frac{p_{k+1} - p_k}{|p_{k+1} - p_k|} \right), \quad k = 0, \ldots, n-1,
\]
where $R$ is the $\pi/2$-rotation or $-\pi/2$-rotation in $\R^2$.
It does not matter whichever we choose but we choose the same $R$ for all $k$.

For a discrete curve $\Gamma_h = \{ p_k \}_k$ with an orientation, the length of $\Gamma_h$ and the $2$-dimensional volume (area) bounded by $\Gamma_h$ are defined by
\[
L (\Gamma_h) := \sum_k l_k = \sum_k |p_{k+1} - p_k|, \quad
\Vol(\Gamma_h) := \frac{\,1\,}{2} \sum_k \< p_k, \nu_k \> |p_{k+1} - p_k|
\]
In these settings, we consider the following question:
\begin{center}
{\em What is the unit normal, curvature and line element at the vertices ?}
\end{center}
A realization of the dual graph may give an answer for this question, but we do not consider such a realization.
In order to approach the problem, we will extract the discrete curvature vector from the first variation of the length.
Therefore we first derive the first variation formula of the length functional.
In the discrete setting we consider the variation of vertices, that is, piecewise linear variations.
We consider a variation
\[
p_k(t) = p_k + t v_k +O(t^2), \quad k = 0, \ldots, n-1
\]
where ${}^t \vec{v} = ({}^t v_0, \ldots, {}^t v_{n-1}) \in \R^{2n}$ is the ``variation vector field''.
If $p_k$ is a boundary point of $\Gamma_h$, then we assume $v_k =0$.

We want to find the vector ${}^t \nabla L \in \R^{2n}$ satisfies
\[
\delta L
:= \frac{d}{dt}_{|t=0} L
= \< \vec{v}, \nabla L \>_{\R^{2n}}
= \sum_k \< v_k, \nabla_{p_k} L \>_{\R^2},
\]
where we write ${}^t \nabla L = ({}^t \nabla_{p_1} L, \ldots, {}^t \nabla_{p_n} L) \in \R^{2n}$.
Since this is just a direction derivative in $\R^{2n}$, the following proposition is immediately:
\begin{prop}
\label{prop:DiscreteFirstVariationFormulaCurve}
Let $\Gamma_h = \{ p_k \}_k$ be a discrete closed curve and $p_k$ be an interior vertex.
Then the gradient the length can be expressed in the following formula:
\begin{equation}
\label{eq:anisoEnergyGradCurve}
\nabla_{p_k} L 
= R (\nu_k - \nu_{k-1})
= - \frac{p_{k+1}-p_k}{l_k} + \frac{p_k - p_{k-1}}{l_{k-1}}.
\end{equation}
\end{prop}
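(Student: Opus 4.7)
The plan is to compute the gradient directly from the definition of $L$, since the statement is effectively a direct computation of a directional derivative in $\R^{2n}$. First I would observe that in the sum $L(\Gamma_h) = \sum_j |p_{j+1} - p_j|$, the vertex $p_k$ appears in exactly two summands, namely $l_{k-1} = |p_k - p_{k-1}|$ and $l_k = |p_{k+1} - p_k|$; all other $l_j$ are constant in $p_k$. Hence only these two terms contribute to $\nabla_{p_k} L$.

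Next, I would apply the standard identity $\nabla_x |x - a| = (x-a)/|x-a|$ (valid since $l_{k-1}, l_k \ne 0$ by the definition of a discrete curve) to each of the two contributing summands. This gives
\[
\nabla_{p_k} l_{k-1} = \frac{p_k - p_{k-1}}{l_{k-1}}, \qquad \nabla_{p_k} l_k = -\frac{p_{k+1}-p_k}{l_k},
\]
and summing yields the second equality in \eqref{eq:anisoEnergyGradCurve}.

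To get the first equality, I would use that $R$ is a $\pm\pi/2$-rotation of $\R^2$, so $R^2 = -\id$. From the definition $\nu_j = R\bigl((p_{j+1}-p_j)/l_j\bigr)$ one then has
\[
R\nu_j = R^2\!\left(\frac{p_{j+1}-p_j}{l_j}\right) = -\frac{p_{j+1}-p_j}{l_j},
\]
and applying this for $j = k$ and $j = k-1$ rewrites the expression obtained in the previous step as $R(\nu_k - \nu_{k-1})$.

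There is really no serious obstacle here; the only subtlety worth flagging is making sure the two representations are shown to agree, which requires the $R^2 = -\id$ identity, and noting that the interior hypothesis on $p_k$ is what guarantees both neighboring edges $e_{k-1}, e_k$ exist so that both terms in the gradient are present (at a boundary vertex only one would appear, but by assumption boundary vertices are held fixed so the gradient is not computed there).
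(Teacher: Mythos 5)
Your proposal is correct and is exactly the computation the paper has in mind: the paper offers no explicit proof, remarking only that the formula is ``just a direction[al] derivative in $\R^{2n}$,'' and your direct differentiation of the two summands containing $p_k$ together with the identity $R^2 = -\id$ fills in precisely that computation. No gaps.
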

By using this formula we have
\[
\delta L
= \sum_k \< \nabla_{p_k} L, v_k \>
= - \sum_k \left\< -\frac{\,1\,}{L_k} \nabla_{p_k} L, v_k \right\> L_k,
\]
where we inserted some auxilary function $L_k$ defined on the vertices.
This kind of observation is essentially remarked in the paper~\cite{ChGl2007}.
\begin{defn}[Discrete curvature vector]
For a positive function $L_k$ defined on the vertices, we call the vector
\begin{equation}
\label{eq:DiscreteCurvatureVector}
\wt{N}_k 
= - \frac{\,1\,}{L_k} \nabla_{p_k} L
= \frac{\,1\,}{L_k} \left( \frac{p_{k+1} - p_k}{l_k} - \frac{p_k - p_{k-1}}{l_{k-1}} \right)
\end{equation}
the {\em discrete curvature vector with respect to $L_k$}.
\end{defn}
\begin{rmks}
There are some reasons why we can regard the vector $\wt{N}_k$ as the curvature vector.
First, the vector $\wt{N}_k$ is independent of the choice of the unit normal, i.e., an intrinsic quantity.
Moreover, the second expression \eqref{eq:DiscreteCurvatureVector} can be regarded as a discretization of a part of the Frenet-Serret formula.
As we will remark below, suitable choices of the function $L_k$ derives various notions of discrete curvature defined in \cite{Hat2019}, \cite{Hof2008}.
Non-uniqueness of the function $L_k$ comes from the fact that there is no natural line element, i.e. the metric, at vertices.
\end{rmks}


\section{Relation with other notions of the discrete curvature}
\label{sec:RelationDiscreteCurvature}

In this section, non-uniqueness of the line element at the vertices gives various notions of the discrete curvature defined in \cite{Hat2019}, \cite{Hof2008}.

To describe the curvature notions, we have to define the angles at vertices.
We define (the absolute value of) the angle between $\nu_{k-1}$ and $\nu_k$ as $\theta_k$, i.e.,
\[
\cos \theta_k 
:= \< \nu_k, \nu_{k-1} \>
= \frac{ \< p_{k+1} - p_k, p_k - p_{k-1} \> }{|p_{k+1} - p_k| \cdot |p_k - p_{k-1}|}.
\]
We have to care about the signature of $\theta_k$.
Let $R_\theta$ be the $\theta$-rotation in $\R^2$.
We assign the signature depends on the choice of the rotation $R$ appeared in the definition of the edge normal:
\[
\sigma := 
\begin{cases}
+1 \quad &{\rm if} \quad R = R_{\pi/2} \\
-1 \quad &{\rm if} \quad R = R_{-\pi/2}
\end{cases}
\]
In this situation the signature of $\theta_k$ is determined by the equation $R_{\sigma \theta_k} (\nu_{k-1}) = \nu_k$.
We easily see that $\sum_k \theta_k = 2m \pi$ for some integer $m \in \Z$ for any closed curve.
We define the discrete curvature with respect to the choice of $L_k$ by the length of the discrete curvature vector with respect to $L_k$:
\begin{defn}
For a positive function $L_k$ defined on the vertices, we call the value
\[
\kappa(p_k) := \frac{2 \sin (\theta_k/2)}{L_k}
\]
the {\em discrete curvature with respect to} $L_k$.
\end{defn}
\begin{rmk}
If we use the expression $\nabla_{p_k} = R(\nu_k - \nu_{k-1})$, the discrete curvature with respect to $L_k$ can be written as $\kappa(p_k) = |\nu_k - \nu_{k-1}|/L_k$ up to the signature.
This can be regarded as a discretization of the curvature for a regular planar curve.
\end{rmk}
In the lecture note by Tim Hoffmann \cite{Hof2008}, three kinds of notions of the curvature for discrete curves are introduced:
\begin{enumerate}
\item
The curvature at vertices by using the vertex osculating circle method,
\item
The curvature at edges by using the edge osculating circle method,
\item
The curvature at vertices by using edge osculating circle for ``arclength parametrized'' curve.
\end{enumerate}
Moreover, Hatakeyama~\cite{Hat2019} also defined the curvature for discrete curves another way.
We will show that these curvature notions can be derived from our viewpoint.
\begin{prop}[The vertex osculating circle method~\cite{Hof2008}]
If we choose
\[
L_k 
= \frac{|p_{k+1} - p_{k-1}|}{2 \cos (\theta_k / 2)}
= \frac{|p_{k+1} - p_k + p_k - p_{k-1}|}{2 \cos (\theta_k / 2)},
\]
then the discrete curvature with respect to $L_k$ becomes
\[
\kappa(p_k) 
= \frac{2 \sin \theta_k}{|p_{k+1} - p_{k-1}|}
= \frac{2 \sin \theta_k}{|p_{k+1} - p_k + p_k - p_{k-1}|}
\]
and this value coincides with the curvature based on the vertex osculating circle method.
\end{prop}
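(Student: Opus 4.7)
The proof splits naturally into two parts, corresponding to the two equalities in the statement.

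For the first part (the substitution), the plan is simply to plug the prescribed $L_k$ into the definition $\kappa(p_k) = 2\sin(\theta_k/2)/L_k$ and simplify using the double-angle identity $2\sin(\theta_k/2)\cos(\theta_k/2) = \sin\theta_k$. Explicitly,
\[
\kappa(p_k) = \frac{2\sin(\theta_k/2)}{L_k} = \frac{2\sin(\theta_k/2)\cdot 2\cos(\theta_k/2)}{|p_{k+1}-p_{k-1}|} = \frac{2\sin\theta_k}{|p_{k+1}-p_{k-1}|},
\]
which is exactly the claimed expression. This part is essentially a one-line calculation.

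For the second part, the task is to show that $2\sin\theta_k/|p_{k+1}-p_{k-1}|$ equals the reciprocal radius of the circle through the three consecutive vertices $p_{k-1}, p_k, p_{k+1}$. My approach is to apply the extended law of sines in the triangle with vertices $p_{k-1}, p_k, p_{k+1}$: if $R$ denotes the circumradius of this triangle, then
\[
\frac{|p_{k+1}-p_{k-1}|}{\sin(\angle p_k)} = 2R,
\]
where $\angle p_k$ is the interior angle of the triangle at the vertex $p_k$. The key geometric observation is that $\theta_k$ (the angle between the edge normals $\nu_{k-1}$ and $\nu_k$, equivalently between the edge directions $p_k - p_{k-1}$ and $p_{k+1}-p_k$) is precisely the exterior turning angle at $p_k$, so the interior angle satisfies $\angle p_k = \pi - \theta_k$. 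Consequently $\sin(\angle p_k) = \sin\theta_k$, and rearranging yields
\[
\frac{1}{R} = \frac{2\sin\theta_k}{|p_{k+1}-p_{k-1}|},
\]
which matches the computed $\kappa(p_k)$.

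The only mildly delicate point is matching signs/orientations: one must check that regardless of whether the polygon turns to the left or right at $p_k$, the value $\theta_k \in (0,\pi)$ (or appropriately defined) gives the supplementary relation $\angle p_k = \pi - \theta_k$, so that $\sin\theta_k \geq 0$ agrees with $\sin(\angle p_k)$. This is where one has to be slightly careful given the author's convention that assigns a signed $\theta_k$ via $R_{\sigma\theta_k}(\nu_{k-1}) = \nu_k$; however since $\sin$ appears through $2\sin(\theta_k/2)$ and the expression $2\sin\theta_k/|p_{k+1}-p_{k-1}|$, both of which are invariant under the sign ambiguity (up to an overall orientation of the curvature vector that the author already acknowledges), the identification with the inverse osculating radius holds. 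I do not anticipate any further obstacle: once the law-of-sines step is written down, the proposition follows.
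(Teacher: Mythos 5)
Your proposal is correct and matches what the paper intends: the paper states this proposition without any written proof, treating the first equality as the immediate substitution $2\sin(\theta_k/2)\cdot 2\cos(\theta_k/2)=2\sin\theta_k$ and the second as the standard law-of-sines identification of $2\sin\theta_k/|p_{k+1}-p_{k-1}|$ with the reciprocal circumradius of the circle through $p_{k-1},p_k,p_{k+1}$ (the content of the cited reference). Your remark that the interior angle at $p_k$ is $\pi-\theta_k$, so $\sin(\angle p_k)=\sin\theta_k$, is exactly the right justification, and your handling of the sign convention is consistent with the paper's definition of $\sigma\theta_k$.
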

\begin{prop}[For the arclength parametrized curves~\cite{Hof2008}]
Assume $l_k = l_{k-1} = l_0$.
Then if we choose
\[
L_k = l_0 \cos \frac{\theta_k}{2} = \frac{l_k + l_{k-1}}{2} \cdot \cos \frac{\theta_k}{2},
\]
then the discrete curvature with respect to $L_k$ becomes
\[
\kappa(p_k) = \frac{2}{l_0} \tan \frac{\theta_k}{2}
\]
and this value coincides with the curvature of arclength parametrized curve.
\end{prop}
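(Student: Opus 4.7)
The plan is to settle the claim in two stages: an immediate algebraic substitution, followed by a short geometric verification that the resulting formula is indeed the one produced by the edge osculating circle construction for arclength parametrized discrete curves.

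For the first stage, I would simply insert the proposed $L_k = l_0 \cos(\theta_k/2)$ into the defining formula $\kappa(p_k) = 2\sin(\theta_k/2)/L_k$. This yields
\[
\kappa(p_k) = \frac{2\sin(\theta_k/2)}{l_0 \cos(\theta_k/2)} = \frac{2}{l_0}\tan\frac{\theta_k}{2},
\]
which is purely trigonometric and uses no additional input. The alternative expression $L_k = ((l_k+l_{k-1})/2)\cos(\theta_k/2)$ in the statement is then just a rewriting under the hypothesis $l_k = l_{k-1} = l_0$.

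For the second stage, I would recall the edge osculating circle construction in the arclength case. Because the two adjacent edges $e_{k-1}$ and $e_k$ have equal length $l_0$, their perpendicular bisectors both pass through a common point on the interior angle bisector at $p_k$. This point is the center $O$ of a circle tangent to both edges precisely at their midpoints $M_{k-1}$ and $M_k$; this is Hoffmann's edge osculating circle associated to the vertex $p_k$. To compute its radius I would work in the right triangle $p_k O M_k$: the leg along the edge has length $l_0/2$, the angle at $p_k$ between the edge and the bisector is $(\pi-\theta_k)/2$, and the right angle is at $M_k$ because the radius is orthogonal to the tangent edge. Elementary trigonometry gives $r = (l_0/2)\cot(\theta_k/2)$, hence curvature $1/r = (2/l_0)\tan(\theta_k/2)$, matching the expression above.

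The easy part is the algebraic identity. The only thing that needs a little care is the geometric step: one must confirm that the circle tangent to both edges at their midpoints really is Hoffmann's edge osculating circle in the arclength-parametrized setting (as opposed to, say, the three-point circle through $p_{k-1}, p_k, p_{k+1}$, which would give $\kappa = 2\sin(\theta_k/2)/l_0$ instead). Fixing this convention and being consistent with the sign of $\theta_k$ (determined by the rotation $R$ and the equation $R_{\sigma\theta_k}(\nu_{k-1})=\nu_k$) is the main, though minor, obstacle. Once these conventions are pinned down, the two stages combine to give the claimed formula and its agreement with Hoffmann's definition.
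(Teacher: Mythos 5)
Your proposal is correct, and its first stage (substituting $L_k = l_0\cos(\theta_k/2)$ into $\kappa(p_k)=2\sin(\theta_k/2)/L_k$ to get $(2/l_0)\tan(\theta_k/2)$) is exactly the one-line computation the paper relies on --- the paper states this proposition without any written proof, treating the substitution as immediate and the agreement with Hoffmann's definition as a matter of citation. Your second stage, deriving $r=(l_0/2)\cot(\theta_k/2)$ from the circle tangent to both equal-length edges at their midpoints, is a correct and worthwhile verification of that cited agreement, but it goes beyond what the paper itself records.
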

In the paper~\cite{Hat2019}, the discrete curvature at the vertex is defined as
\[
\kappa(p_k) 
:= \frac{1}{|p_k - p_{k-1}|} \left| \frac{p_{k+1} - p_k}{|p_{k+1} - p_k|} - \frac{p_k - p_{k-1}}{|p_k - p_{k-1}|}\right|
= - \frac{|\nabla_{p_k} L|}{|p_k - p_{k-1}|}.
\]
Then we immediately have the following result:
\begin{prop}
If we choose $L_k = l_{k-1} = |p_k - p_{k-1}|$, then the discrete curvature with respect to $L_k$ coincides with the discrete curvature defined in \cite{Hat2019}.
\end{prop}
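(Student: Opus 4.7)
The plan is to show that both expressions collapse to the common quantity $2\sin(\theta_k/2)/l_{k-1}$, using the geometric interpretation of the gradient of the length established in Proposition \ref{prop:DiscreteFirstVariationFormulaCurve} and the remark that follows the definition of $\kappa(p_k)$.

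First I would compute $|\nabla_{p_k} L|$ directly from the formula $\nabla_{p_k} L = R(\nu_k - \nu_{k-1})$. Since $R$ is a Euclidean isometry, this equals $|\nu_k - \nu_{k-1}|$. The two vectors $\nu_k$ and $\nu_{k-1}$ are unit vectors with angle $\theta_k$ between them, so an elementary trigonometric identity (the chord length formula) gives $|\nu_k - \nu_{k-1}| = 2\sin(\theta_k/2)$. Applying $R^{-1}$ inside the norm also shows that the quantity
\[
\left|\frac{p_{k+1}-p_k}{|p_{k+1}-p_k|} - \frac{p_k - p_{k-1}}{|p_k - p_{k-1}|}\right|
\]
in Hatakeyama's definition equals $|\nu_k - \nu_{k-1}| = 2\sin(\theta_k/2)$, since both normalized edge vectors are obtained from $\nu_k$ and $\nu_{k-1}$ by the same rotation $R^{-1}$.

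Combining these observations, Hatakeyama's curvature becomes
\[
\kappa(p_k) = \frac{2\sin(\theta_k/2)}{|p_k - p_{k-1}|} = \frac{2\sin(\theta_k/2)}{l_{k-1}},
\]
which is exactly $2\sin(\theta_k/2)/L_k$ evaluated at the choice $L_k = l_{k-1}$. This matches the Definition of the discrete curvature with respect to $L_k$, completing the identification.

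There is no real obstacle here: the whole argument is a two-line verification once one notices that both the factor $2\sin(\theta_k/2)$ (coming from the first variation formula together with the angle $\theta_k$ between consecutive edge normals) and the denominator $l_{k-1}$ appear transparently on both sides. The only small care is to observe that the sign/orientation ambiguity (the $\sigma$ convention) drops out because Hatakeyama's definition takes an absolute value.
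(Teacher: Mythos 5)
Your proposal is correct and follows essentially the same route as the paper, which states the result as immediate from the identity $\kappa(p_k) = |\nabla_{p_k}L|/|p_k-p_{k-1}|$ (up to sign) together with $|\nabla_{p_k}L| = |\nu_k - \nu_{k-1}| = 2\sin(\theta_k/2)$; you have simply written out the short verification the author leaves implicit. Your closing remark about the sign convention is also consistent with the paper's own caveat that the identification holds ``up to the signature.''
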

Before considering the edge osculating circle method, we shall modify the first variation formula from the vertex-based expression to the edge-based expression.
If we put $v_k = (w_k + w_{k-1})/2$, then we have
\begin{align*}
\delta L
&= \frac{1}{2} \sum_k \< \nabla_{p_k} L, w_k + w_{k-1} \> \\
&= \frac{1}{2} \sum_k \< \nabla_{p_k} L + \nabla_{p_{k+1}} L, w_k \>
= - \sum_k \left\< \frac{R(\nu_{k-1} - \nu_{k+1})}{2L_k'}, w_k \right\> L_k'
\end{align*}
where $L_k'$ is some auxiliary function.
As in the vertex case, we call the value
\[
\kappa(e_k) := \frac{1}{L_k'} \cdot \sin \frac{\theta_k + \theta_{k+1}}{2}
\]
the {\em discrete curvature at the edge $e_k = [p_k, p_{k+1}]$ with respect to $L_k'$}.
\begin{prop}[The edge osculating circle method~\cite{Hof2008}]
If we choose
\[
L_k' 
= l_k \cos \frac{\theta_k}{2} \cos \frac{\theta_{k+1}}{2}
= |p_{k+1} - p_k| \cos \frac{\theta_k}{2} \cos \frac{\theta_{k+1}}{2},
\]
then the discrete curvature with respect to $L_k'$ becomes
\[
\kappa(e_k) = \frac{\tan (\theta_k/2) + \tan (\theta_{k+1}/2)}{|p_{k+1} - p_k|}
\]
and this value coincides with the curvature based on the edge osculating circle method.
\end{prop}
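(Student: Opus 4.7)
The plan is to verify the proposition in two stages: first, a purely trigonometric reduction of $\kappa(e_k) = \sin\bigl(\frac{\theta_k+\theta_{k+1}}{2}\bigr)/L_k'$ into the stated sum of tangents, and second, the geometric identification of this quantity with the curvature arising from the edge osculating circle construction of Hoffmann.

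For the trigonometric reduction, I substitute $L_k' = l_k \cos(\theta_k/2)\cos(\theta_{k+1}/2)$ into the definition of the discrete edge curvature. The sine addition formula gives
\[
\sin\frac{\theta_k+\theta_{k+1}}{2}
= \sin\frac{\theta_k}{2}\cos\frac{\theta_{k+1}}{2} + \cos\frac{\theta_k}{2}\sin\frac{\theta_{k+1}}{2},
\]
and dividing through by $\cos(\theta_k/2)\cos(\theta_{k+1}/2)$ yields exactly $\tan(\theta_k/2) + \tan(\theta_{k+1}/2)$. Recalling $l_k = |p_{k+1}-p_k|$, this establishes the claimed formula for $\kappa(e_k)$.

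For the geometric identification, I recall that the edge osculating circle at $e_k$ is the unique circle inscribed in the region bounded locally by the three consecutive edges $e_{k-1}$, $e_k$, $e_{k+1}$; equivalently, the circle tangent to all three oriented edge-lines on the appropriate side. Its center lies on the angle bisectors at $p_k$ and at $p_{k+1}$, and if $r$ denotes its radius, then the length of the tangent segment from $p_k$ along $e_k$ to the point of tangency is $r\tan(\theta_k/2)$, while the analogous segment from $p_{k+1}$ has length $r\tan(\theta_{k+1}/2)$. Since these two tangent segments partition $e_k$, we obtain
\[
l_k = r\tan\frac{\theta_k}{2} + r\tan\frac{\theta_{k+1}}{2},
\]
so $1/r$ coincides with the expression derived in the first step, completing the proof.

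I do not expect any serious obstacle: the trigonometric step is a one-line application of the sine addition formula, and the geometric step reduces to the elementary tangent-length identity for an inscribed circle in an angle. The only delicate point is a consistent sign convention for $\theta_k, \theta_{k+1}$ (which may be negative at concave vertices); this is already handled by the signed definition $R_{\sigma\theta_k}(\nu_{k-1}) = \nu_k$ introduced earlier, under which the formula $l_k = r(\tan(\theta_k/2)+\tan(\theta_{k+1}/2))$ remains valid with a signed radius $r$, and the identity of discrete curvatures persists.
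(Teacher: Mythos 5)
Your proposal is correct: the trigonometric reduction via the sine addition formula is exactly the (unwritten) computation the paper intends, since the proposition follows by direct substitution of $L_k'$ into the definition $\kappa(e_k) = \sin\bigl(\tfrac{\theta_k+\theta_{k+1}}{2}\bigr)/L_k'$, and your tangent-length argument correctly supplies the identification with Hoffmann's edge osculating circle that the paper simply delegates to the citation. The sign discussion for concave vertices is a welcome extra but changes nothing essential.
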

\begin{rmk}
To define the discrete curvature, we have to choose $L_k$ (respectively $L_k'$) properly.
That means if $l_k$, $l_{k-1} \to ds$ and $\theta_k \to 0$, then $L_k$ (respectively $L_k'$) must converge to $ds$, i.e., $L_k$ must be a ``good'' candidate for a discrete line element.
We can check that $L_k$ and $L_k'$ satisfy this condition in the above examples.
\end{rmk}
\begin{rmk}[``No free lunch'' for the discrete Laplacian, cf. \cite{WaMaKaGr2007}]
We consider these kinds of ``no free lunch'' story for the discrete Laplacian which will be used in the second variation formula.
On a discrete curve $\Gamma_h$ with the vertex set $V$, we consider a function $\psi : V \to \R$.
Then the {\em gradient} and the {\em Laplacian} of $\psi$ can be defined as
\[
\nabla \psi_k := \frac{\psi_{k+1} - \psi_k}{l_k},\quad
\Delta \psi_k := \frac{1}{L_k} (\nabla \psi_k - \nabla \psi_{k-1}) 
=\frac{1}{L_k} \left( \frac{\psi_{k+1} - \psi_k}{l_k} - \frac{\psi_k - \psi_{k-1}}{l_{k-1}} \right),
\]
where we denote $\psi_k := \psi(k)$.
Note that the gradient is the ``edge-based operator'' but the Laplacian is the ``vertex-based'' operator.
In addition, the discrete curvature vector $\wt{N}_k$ with respect to $L_k$ can be written as $\Delta p_k$.

From another point of view, if we define the Dirichlet energy of $\psi$ as
\[
E_h(\psi) 
:= \frac{1}{2} \sum_k |\nabla \psi_k|^2 \, l_k
= \frac{1}{2} \sum_k \frac{|\psi_{k+1} - \psi_k|^2}{l_k},
\]
then the first variation of the energy becomes
\[
\delta E_h (\psi)
= \sum_k \frac{\< \psi_{k+1} - \psi_k, \vp_{k+1} - \vp_k \>}{l_k}
= \sum_k \< \nabla \psi_{k-1} - \nabla \psi_k, \vp_k \> 
= - \sum_k \< \Delta \psi_k, \vp_k \> L_k,
\]
where we take the variation of $\psi$ as $\psi_k (t) = \psi_k + t \vp_k + O(t^2)$.
Therefore $\delta E_h (\psi) = 0$ if and only if $\Delta \psi_k = 0$.
Note that the condition $\Delta \psi_k = 0$ is independent of the choice of $L_k$.

As in the curvature case, the Laplacian can be changed since there is no natural ``line element divisor $L_k$''.
However, with another function $\vp : V \to \R$, we still have the following properties since the quanties $\Delta \psi_k L_k$ are independent of $L_k$:
\begin{enumerate}
\item
If $\psi$ is constant, then $\Delta \psi = 0$.
\item
The condition $\Delta \psi = 0$ is independent of the choice of $L_k$ and in this case we have the mean value property:
\[
\psi_k = \frac{l_{k-1}}{l_k + l_{k-1}} \psi_{k+1} + \frac{l_k}{l_k + l_{k-1}} \psi_{k-1}. 
\]
\item $L^2$ symmetric property:
\[
\sum_k \psi_k \cdot \Delta \vp_k \cdot L_k = \sum_k \Delta \psi_k \cdot \vp_k \cdot L_k.
\]
Note that the summation is vertex-based.
\item Integration by parts:
\[
- \sum_k \psi_k \cdot \Delta \vp_k \cdot L_k = \sum_k \nabla \psi_k \cdot \nabla \vp_k \cdot l_k.
\]
Note that the right hand side is the edge-based summation but the left hand side is the vertex-based summation.
As a corollary, the operator $-\Delta$ is positive semi-definite.
\end{enumerate}
\end{rmk}


\section{Equilibrium curves of the length functional}
\label{sec:DiscreteCMCCurves}

In the previous section, we showed that non-uniqueness of the line element at vertices gives various discrete curvature notions.
However, the equilibrium curves for the length functional under the area-constraint condition should be characterized as some ``constant curvature'' objects by virture of the smooth case.
In this section, we show that such equilibrium curves can be characterized as regular polygons and that they are certainly regarded as ``constant curvature'' objects.

By a direct calculation we have the following result.
\begin{lem}
For any vertex $p_k$ of $\Gamma_h$ the gradient of the area $\nabla_{p_k} \Vol$ is given by
\[
\nabla_{p_k} \Vol = \frac{1}{2} R (p_{k+1} - p_{k-1}).
\]
\end{lem}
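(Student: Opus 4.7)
The plan is to compute the gradient by writing the area functional in shoelace form and then isolating the dependence on $p_k$.

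First I would rewrite the area. Since $\nu_k |p_{k+1}-p_k| = R(p_{k+1}-p_k)$, the definition gives
\[
\Vol(\Gamma_h) = \frac{1}{2} \sum_k \< p_k, R(p_{k+1} - p_k) \>.
\]
Because $R$ is rotation by $\pm \pi/2$, we have $\< p_k, R p_k\> = 0$, so the self-term vanishes and the formula simplifies to the standard shoelace expression $\Vol(\Gamma_h) = \frac{1}{2} \sum_k \< p_k, R p_{k+1}\>$.

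Next I would identify the terms in this sum that actually depend on the vertex $p_k$: only the $k$th term $\frac{1}{2}\< p_k, R p_{k+1}\>$ and the $(k-1)$th term $\frac{1}{2}\< p_{k-1}, R p_k\>$. Differentiating the first with respect to $p_k$ gives $\frac{1}{2} R p_{k+1}$, and differentiating the second gives $\frac{1}{2} R^{T} p_{k-1}$. Here I would use the key algebraic fact that $R$ is an orthogonal rotation by $\pm \pi/2$, hence $R^{T} = R^{-1} = -R$. Substituting yields
\[
\nabla_{p_k} \Vol = \frac{1}{2} R p_{k+1} - \frac{1}{2} R p_{k-1} = \frac{1}{2} R (p_{k+1} - p_{k-1}),
\]
which is the desired formula.

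There is no real obstacle; the only subtle point is the bookkeeping of which indices contribute to the dependence on $p_k$, together with the relation $R^T = -R$ that makes the two contributions combine into a single rotated difference. Independence of the result from the choice of sign of $R$ is manifest since both contributions scale the same way under $R \mapsto -R$.
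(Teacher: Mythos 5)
Your proof is correct, and it is exactly the direct calculation the paper alludes to (the paper gives no written proof beyond ``by a direct calculation''): reduce $\Vol$ to the shoelace form $\tfrac12\sum_k \< p_k, R p_{k+1}\>$ and differentiate, using $R^{T}=-R$ to combine the two contributing terms. No gaps.
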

\begin{rmk}[Another ``no free lunch'' story]
We can modify the first variation formula of the volume as follows:
\[
\delta \Vol
= \frac{1}{2} \sum_k \< R(p_{k+1} - p_k + p_k - p_{k-1}), v_k \>
= \sum_k \left\< \frac{l_k \nu_k + l_{k-1} \nu_{k-1}}{2 L_k}, v_k \right\> L_k.
\]
At a glance, it seems like a natural to choose $2L_k = l_k + l_{k-1} = \Length( \star (p))$ and this is also frequently used as a ``vertex normal'' (a weighted sum of the edge normals):
\[
N_k^V := \frac{l_k \nu_k + l_{k-1} \nu_{k-1}}{l_k + l_{k-1}}.
\]
In addition, we have
\[
-\nabla_{p_k} L
= R(\nu_{k-1} - \nu_k)
= \frac{\sin \theta_k}{1 + \cos \theta_k} (\nu_k + \nu_{k-1})
= 2 \tan \frac{\theta_k}{2} \cdot \frac{\nu_k + \nu_{k-1}}{2}.
\]
by a simple calculation.
Therefore, unless the curve is arclength parameterized, there are (at least) two choices of the ``vertex normal'' from the variational viewpoint: using the length gradient (length descent direction) or using the volume gradient (volume descent direction).
This suggests that, in contrast to the smooth case, we have to choose the ``prefered'' vertex normal according to the energy in question.
\qed
\end{rmk}
\begin{ex}[Regular polygons]
\label{ex:RegularPolygons}
Let us take a discrete curve $\Gamma_h^{m,n} = \{ p_k \}_k$ as in the following way (including non-convex regular $n$-gon with radius $a$):
\[
p_k = a \exp (2 \pi \sqrt{-1} m k/n),
	\quad k=0, \ldots, n-1, \quad 1 \leq m \leq n-1,
\]
where we assume that $m/n \neq 1/2$.
In particular, we sometimes call the curve $\Gamma_h^{1,n}$ as a convex regular $n$-gon.
Note that $\Gamma_h^{n-1, n}$ is also convex but it has an opposite unit normal with $\Gamma_h^{1,n}$ (usually we assume that $\Gamma_h^{1,n}$ has the outward-pointing unit normal).
\begin{figure}[htbp]
\begin{tabular}{c}
	\begin{minipage}{0.25\hsize}
		\centering
		\includegraphics[height=2.2cm]{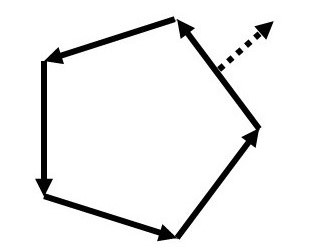}
		\subcaption{$\kappa = -1/\cos(\pi/5)$}
	\end{minipage}
	\begin{minipage}{0.25\hsize}
		\centering
		\includegraphics[height=2.2cm]{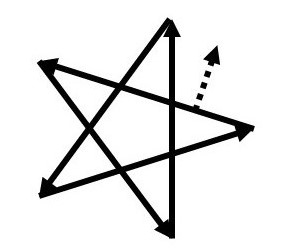}
		\subcaption{$\kappa = -1/\cos(2\pi/5)$}
	\end{minipage}
	\begin{minipage}{0.25\hsize}
		\centering
		\includegraphics[height=2.2cm]{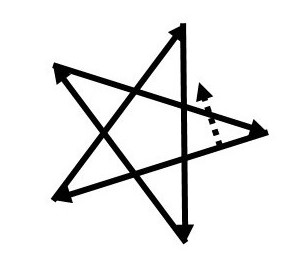}
		\subcaption{$\kappa = -1/\cos(3\pi/5)$}
	\end{minipage}
	\begin{minipage}{0.25\hsize}
		\centering
		\includegraphics[height=2.2cm]{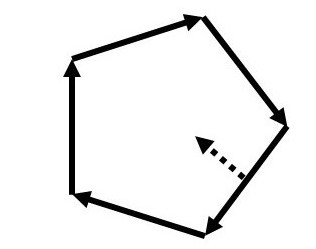}
		\subcaption{$\kappa = -1/\cos(4\pi/5)$}
	\end{minipage}
\end{tabular}
\caption{Convex and non-convex regular $5$-gons with radii $a=1$.}
\end{figure}
Then the curve $\Gamma_h^{m,n}$ is a critical point of the functional $\Length + \kappa \Vol$ with $\kappa = - 1/ (a \cos (m \pi /n))$.
This value is the reciprocal of the radius of the inscribed circle of the polygon (up to the signature).
We sometimes say that a convex regular $n$-gon with radius $a$ (and outward-pointing unit normal) has constant curvature $\kappa_n = -1/(a \cos (\pi/n))$.
Note that $\cos(\pi m/n) = -\cos (\pi (n-m)/n)$ and $\kappa_n \to -1/a$ when $n \to \infty$.
\end{ex}
We will show that these regular polygons are the only equilibrium curves for the functional $L + \kappa \Vol$.
\begin{thm}
Let $\Gamma_h = \{ p_k \}_{k=1}^n$ be a closed discrete curve and take $\kappa \in \R \sm \{ 0 \}$.
Then the following two conditions are equivalent:
\begin{enumerate}
\item
$\Gamma_h$ is an equilibrium curve of the functional $L + \kappa \Vol$.
\item
There exist numbers $l_0$ and $\theta_0$ such that $l_k \equiv l_0$ and $\theta_k \equiv \theta_0$ satisfying $\kappa l_0 = 2 \tan (\theta_0 /2)$, i.e., $\Gamma_h$ must be a regular polygon.
\end{enumerate}
\end{thm}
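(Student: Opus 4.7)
The plan is to turn the equilibrium condition into a pointwise linear equation in the basis $\{\nu_{k-1},\nu_k\}$ of $\R^2$ and then read off the conclusion by comparing coefficients. Using Proposition~\ref{prop:DiscreteFirstVariationFormulaCurve} together with the area-gradient lemma, the condition $\nabla_{p_k}(L+\kappa\,\Vol)=0$ at every vertex reads
\[
R(\nu_k - \nu_{k-1}) \;+\; \tfrac{\kappa}{2}\bigl(l_k\nu_k + l_{k-1}\nu_{k-1}\bigr) \;=\; 0,
\]
since $R(p_{k+1}-p_{k-1}) = l_k\nu_k + l_{k-1}\nu_{k-1}$. Rewriting the first term by the identity $-\nabla_{p_k}L = \tan(\theta_k/2)\,(\nu_k + \nu_{k-1})$ recorded in the ``no free lunch'' remark preceding the theorem, this is equivalent to
\begin{equation*}
\bigl(\tan(\theta_k/2) - \tfrac{\kappa l_k}{2}\bigr)\nu_k \;+\; \bigl(\tan(\theta_k/2) - \tfrac{\kappa l_{k-1}}{2}\bigr)\nu_{k-1} \;=\; 0. \tag{$\ast$}
\end{equation*}

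Next I would argue that $\nu_k,\nu_{k-1}$ are linearly independent at every vertex of an equilibrium, i.e., $\theta_k\notin\{0,\pi\}$. If $\nu_k=\nu_{k-1}$ then $\nabla_{p_k}L=R(\nu_k-\nu_{k-1})=0$, which forces $\nabla_{p_k}\Vol = \tfrac12 R(p_{k+1}-p_{k-1}) = 0$, absurd since $l_{k-1},l_k>0$. If $\nu_k=-\nu_{k-1}$, the equilibrium equation reduces to $2R\nu_k + \tfrac{\kappa}{2}(l_k-l_{k-1})\nu_k = 0$; projecting onto $R\nu_k\perp\nu_k$ yields $2=0$, a contradiction. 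Hence the two coefficients in $(\ast)$ must vanish separately, giving
\[
\tan(\theta_k/2) \;=\; \tfrac{\kappa l_k}{2} \;=\; \tfrac{\kappa l_{k-1}}{2} \qquad\text{for every }k.
\]
The second equality propagates around the closed curve to give $l_k\equiv l_0$ for some $l_0>0$; then $\tan(\theta_k/2)\equiv \kappa l_0/2$, and since $\theta_k\in(-\pi,\pi)$ makes $\tan(\cdot/2)$ injective, $\theta_k\equiv\theta_0$ with $\kappa l_0 = 2\tan(\theta_0/2)$.

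The converse direction is an immediate substitution: if $l_k\equiv l_0$, $\theta_k\equiv\theta_0$ and $\kappa l_0 = 2\tan(\theta_0/2)$, then $(\ast)$ is satisfied at every vertex, so $\nabla_{p_k}(L+\kappa\,\Vol)=0$ for all $k$, and such a curve is congruent to one of the polygons $\Gamma_h^{m,n}$ of Example~\ref{ex:RegularPolygons}.

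The only delicate point is the degenerate-angle dichotomy in the second paragraph; once $\theta_k\in\{0,\pi\}$ is excluded the rest is linear algebra in $\R^2$, and because the statement only asserts the existence of some compatible pair $(l_0,\theta_0)$, no case analysis on the winding number $m$ of the resulting closed polygon is required.
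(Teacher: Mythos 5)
Your proof is correct, and although it starts from the same Euler--Lagrange equation $A_k := R(\nu_k-\nu_{k-1})+\tfrac{\kappa}{2}\,(l_k\nu_k+l_{k-1}\nu_{k-1})=0$ as the paper, the execution is genuinely different and in one respect more careful. The paper tests $A_k$ against $\nu_{k-1}$, $\nu_k$, $\nu_{k+1}$, producing the three scalar identities \eqref{eq:critical1}--\eqref{eq:critical3}, each of which carries a factor $\sin\theta_k$ (resp.\ $\sin\theta_{k+1}$); the deduction $\kappa l_k = 2\tan(\theta_k/2)=\kappa l_{k-1}$ therefore tacitly assumes $\sin\theta_k\neq 0$, and the degenerate angles $\theta_k\in\{0,\pi\}$ are never excluded there. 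You instead expand $A_k$ in the basis $\{\nu_{k-1},\nu_k\}$ via the identity $R(\nu_{k-1}-\nu_k)=\tan(\theta_k/2)\,(\nu_k+\nu_{k-1})$ and compare coefficients, which turns both directions of the equivalence into a one-line coefficient check (the paper's sufficiency argument needs the extra computation of $\langle A_k, p_k-p_{k-1}\rangle$), and you supply the missing dichotomy: $\theta_k=0$ is impossible because then $\nabla_{p_k}L=0$ would force $0=R(p_{k+1}-p_{k-1})=(l_k+l_{k-1})\nu_k$, and $\theta_k=\pi$ is impossible by projecting the unreduced equation onto $R\nu_k\perp\nu_k$. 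The only caveat is presentational: the identity used to produce $(\ast)$ is itself undefined at $\theta_k=\pi$, so the exclusion of the degenerate angles should logically precede the derivation of $(\ast)$ rather than follow it; since you rule out $\theta_k=\pi$ directly from the unreduced equation, this is an ordering issue, not a gap.
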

\begin{prf}
We put 
\[
A_k := (\nu_k - \nu_{k-1}) + \frac{\kappa}{2} (p_{k+1} - p_{k-1}), \quad k=1, \ldots, n. 
\]
Then the discrete curve $\Gamma_h$ is a critical point of the functional $\Length + \kappa \Vol$ if and only if $A_k = 0$ for all $k$.
By a simple calculation we have
\begin{align}
\label{eq:critical1}
\< A_k, \nu_{k-1} \> &= \sin \theta_k \left( \frac{\kappa l_k}{2} - \tan \frac{\theta_k}{2} \right), \\
\label{eq:critical2}
\< A_k, \nu_k \> &= \sin \theta_k \left( \tan \frac{\theta_k}{2} - \frac{\kappa l_{k-1}}{2} \right), \\
\label{eq:critical3}
\< A_{k+1}, \nu_{k+1} \> &= \sin \theta_{k+1} \left( \tan \frac{\theta_{k+1}}{2} - \frac{\kappa l_k}{2} \right).
\end{align}
For the necessity, that is, if we assume $A_k = 0$ for all $k$, then it follows from \eqref{eq:critical1} and \eqref{eq:critical2} that $\kappa l_k = 2 \tan (\theta_k /2) = \kappa l_{k-1}$.
And it also follows from \eqref{eq:critical2}, \eqref{eq:critical3} and using $l_k = l_{k-1}$ that
\[
\tan \frac{\theta_k}{2} = \frac{\kappa l_k}{2} = \frac{\kappa l_{k-1}}{2} = \tan \frac{\theta_{k-1}}{2}.
\]

To prove the sufficiency, since $\nu_k$ and $p_k - p_{k-1}$ forms a basis of $\R^2$ and $\< A_k, \nu_k \> = 0$ by using \eqref{eq:critical1} and the assumption, all we have to prove is $\< A_k, p_k - p_{k-1} \> = 0$ for all $k$.
By using the assumption $l_k = l_{k-1} = l_0$ and $\theta_k = \theta_0$, we have
\begin{align*}
\left\< A_k, \frac{p_k - p_{k-1}}{l_k} \right\>
&= - \sin \theta_k +\frac{\kappa}{2} (l_k \cos \theta_k + l_{k-1}) \\ 
&= (1 + \cos \theta_0) \left( \frac{\kappa l_0}{2} - \frac{\sin \theta_0}{1+\cos \theta_0} \right)
= (1 + \cos \theta_0) \left( \frac{\kappa l_0}{2} - \tan \frac{\theta_0}{2} \right)
= 0.
\end{align*}
This shows $A_k=0$ and proves the statement.
\qed
\end{prf}
\begin{rmk}
The equilibrium condition $A_k = \nu_k - \nu_{k-1} + (\kappa/2) (p_{k+1} - p_{k-1}) = 0$ is equivalent to the condition $\nu_k + (\kappa/2) (p_{k+1} + p_k) \equiv c$ for some constant vector $c \in \R^2$.
The latter condition can be considered as a conservation law for the Euler-Lagrange equation $A_k = 0$.
Since the vector $c \in \R^2$ is just a translation of the curve, we can put $c=0$ and in this case we have $(p_{k+1}+p_k)/2 = - \nu_k / \kappa$.
Therefore, the edge midpoints must be tangent to the unit circle.
\end{rmk}
We found that equilibrium closed curves of the functional $L + \kappa \Vol$ must satisfy $l_k \equiv l_0$, i.e., they must have ``good coordinates (arclength parameter)''.
If we note that we can define the curvature at vertices for an arclength parametrized curve, the previous result can be restated as follows:
\begin{cor}
Let $\Gamma_h$ be an arclength parametrized discrete closed curve, i.e., $l_k \equiv l_0$, and take $\kappa \in \R \sm \{ 0 \}$.
Then the following two conditions are equivalent:
\begin{enumerate}
\item
$\Gamma_h$ is an equilibrium curve of the functional $\Length + \kappa \Vol$.
\item
The discrete curvature $(2/l_0) \tan (\theta_k /2)$ is constant $\kappa$.
\end{enumerate}
\end{cor}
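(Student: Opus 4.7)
The plan is to derive this corollary directly from the preceding theorem, since the arclength hypothesis $l_k \equiv l_0$ strips away one of the three conclusions there and leaves only the relationship between $\kappa$, $l_0$, and the turning angles $\theta_k$.

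For the direction (1) $\Rightarrow$ (2), I would simply invoke the theorem: if $\Gamma_h$ is an equilibrium curve of $\Length + \kappa \Vol$, then there exists some $\theta_0$ with $\theta_k \equiv \theta_0$ and $\kappa l_0 = 2 \tan(\theta_0/2)$. Dividing by $l_0$, the discrete curvature $(2/l_0) \tan(\theta_k/2) = (2/l_0) \tan(\theta_0/2) = \kappa$ is constant, as required.

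For the converse direction (2) $\Rightarrow$ (1), I would assume $(2/l_0) \tan(\theta_k/2) \equiv \kappa$. Then $\tan(\theta_k/2) = \kappa l_0 / 2$ is the same value for every $k$. Since $\theta_k \in (-\pi, \pi]$ implies $\theta_k/2 \in (-\pi/2, \pi/2]$, on which $\tan$ is injective, we conclude that all $\theta_k$ agree to a common value $\theta_0$, and the relation $\kappa l_0 = 2 \tan(\theta_0/2)$ holds by construction. Combined with the hypothesis $l_k \equiv l_0$, the three conditions of part (2) of the theorem are satisfied, so $\Gamma_h$ is an equilibrium curve.

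There is essentially no obstacle here — the only small point to be careful about is the injectivity of $\tan$ on the relevant range of $\theta_k/2$, which is immediate from the way $\theta_k$ was defined as the signed rotation angle between consecutive edge normals. No computation beyond this reorganization is needed, since all the analytic content was already absorbed into the theorem.
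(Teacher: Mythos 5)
Your proposal is correct and matches the paper's intent exactly: the paper offers no separate proof, presenting the corollary as a direct restatement of the preceding theorem, and your two directions (quoting the theorem for necessity, and using injectivity of $\tan$ on $(-\pi/2,\pi/2)$ to recover $\theta_k\equiv\theta_0$ for sufficiency) supply precisely the routine details that restatement requires.
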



\section{Parallel curves and Steiner-type formula}
\label{sec:SteinerFormula}

In this section we will derive the discrete version of the Steiner-type formula in order to show an effectiveness of the vertex normal which we will define.
The following type of Steiner formula is essentially appeared in some papers, e.g. \cite{BaPoWa2010}, \cite{CrWa2017}.
Although they try to {\em find} the curvature notion from the Steiner-type formula, we will {\em derive} the Steiner-type formula by using our vertex normal and connect with the well-known curvature notion.

Let $\Gamma_h = \{ p_k \}_k$ be a discrete curve and take an interior vertex $p_k$.
We can expect that if we normalize the discrete curvature vector, then we have the ``vertex normal''.
Recall that the length of the length gradient $\nabla_{p_k} L$ can be computed as $2 \sin (\theta_k/2)$ up to the signature.
In the discrete case, we should consider another factor $2 \sin (\theta_k/2) \cos (\theta_k/2) = \sin \theta_k$ and put
\begin{equation}
\label{eq:VertexNormal}
N_k := -\frac{\nabla_{p_k} \Length}{\sin \theta_k} 
= \frac{R (\nu_{k-1} - \nu_k)}{\sin \theta_k} 
= \frac{1}{1 + \cos \theta_k} (\nu_k + \nu_{k-1}),
\end{equation}
then we shall call the vector $N_k$ as the {\em vertex normal} at the vertex $p_k$.
The second expression in \eqref{eq:VertexNormal} allows us to define the vertex normal even if $\theta_k=0$.
Then we consider the following deformation of the curve:
\[
p_k(t) = p_k + t N_k, \quad k=1,\ldots, n.
\]
\begin{lem}
We have $\< p_{k+1}(t) - p_k(t), \nu_k \> = 0$, therefore we call this deformation {\em parallel curves}.
\end{lem}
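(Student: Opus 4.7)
The plan is to expand $p_{k+1}(t) - p_k(t) = (p_{k+1} - p_k) + t(N_{k+1} - N_k)$ and take the inner product with $\nu_k$ term by term. The first summand is orthogonal to $\nu_k$ by the very definition of the edge normal $\nu_k$ (since $p_{k+1} - p_k = l_k R^{-1}(\nu_k)$). So everything reduces to showing that $\langle N_{k+1}, \nu_k \rangle = \langle N_k, \nu_k \rangle$.

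Next I would compute each of these inner products directly from the second expression of \eqref{eq:VertexNormal}. Using $\langle \nu_k, \nu_k \rangle = 1$ and $\langle \nu_{k-1}, \nu_k \rangle = \cos \theta_k$, one finds
\[
\langle N_k, \nu_k \rangle = \frac{1}{1+\cos \theta_k} \bigl( 1 + \cos \theta_k \bigr) = 1.
\]
By the same token, using $\langle \nu_{k+1}, \nu_k \rangle = \cos \theta_{k+1}$,
\[
\langle N_{k+1}, \nu_k \rangle = \frac{1}{1+\cos \theta_{k+1}} \bigl( \cos \theta_{k+1} + 1 \bigr) = 1.
\]
Hence the two contributions cancel and $\langle p_{k+1}(t) - p_k(t), \nu_k \rangle = 0$ for every $t$.

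There is really no obstacle here; the only thing worth emphasizing is that this identity is precisely the reason for the somewhat unusual normalization factor $1/(1+\cos \theta_k)$ appearing in \eqref{eq:VertexNormal}. That factor is chosen so that both $N_k$ and $N_{k+1}$ have unit $\nu_k$-component, which is exactly what is needed for the deformed edge to remain parallel to the original one. So the proof I would write is essentially a two-line verification, preceded by a remark pointing out that the property $\langle N_k, \nu_k \rangle = \langle N_k, \nu_{k-1} \rangle = 1$ characterizes the vertex normal and justifies the terminology ``parallel curve.''
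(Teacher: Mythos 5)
Your proof is correct and is essentially the same computation as the paper's: expand $p_{k+1}(t)-p_k(t)$, use that $p_{k+1}-p_k$ is orthogonal to $\nu_k$, and verify $\< N_{k+1},\nu_k\> = \< N_k,\nu_k\> = 1$ from the second expression in \eqref{eq:VertexNormal}. The closing remark about the normalization $1/(1+\cos\theta_k)$ being chosen precisely so that the vertex normals have unit $\nu$-components is a nice observation, but the argument itself matches the paper's line for line.
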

\begin{prf}
This is a direct calculation.
\begin{align*}
\< p_{k+1}(t) - p_k(t), \nu_k \>
&= \< (p_{k+1} - p_k) + t (N_{k+1} - N_k), \nu_k \> \\
&= t \left( \frac{\< \nu_{k+1} + \nu_k, \nu_k \>}{1 + \cos \theta_{k+1}}  
	- \frac{\< \nu_k + \nu_{k-1}), \nu_k \>}{1 + \cos \theta_k} \right) \\
&= t \left( \frac{1 + \cos \theta_{k+1}}{1 + \cos \theta_{k+1}}  
	- \frac{1 + \cos \theta_k}{1 + \cos \theta_k} \right) 
= 0.
\end{align*}
\qed
\end{prf}
\begin{thm}[Discrete Steriner-type formula]
For parallel curves $\{ p_k(t) \}_k$, we have
\[
|p_{k+1} (t) - p_k(t)| = |p_{k+1} - p_k| (1 - t \cdot \kappa(e_k)),
\]
where $\kappa(e_k)$ is the discrete curvature based on the edge osculating circle method~\cite{Hof2008}:
\[
\kappa(e_k) = \frac{\tan (\theta_k/2) + \tan (\theta_{k+1}/2)}{|p_{k+1} - p_k|}.
\]
\end{thm}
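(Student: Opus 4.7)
The plan is to exploit the parallel-curve lemma just proved: it tells us that $p_{k+1}(t)-p_k(t)$ is orthogonal to $\nu_k$, hence collinear with the unit edge vector $\tau_k := (p_{k+1}-p_k)/l_k$. Therefore, for $t$ small enough that the orientation is preserved, the length we want is simply the inner product with $\tau_k$:
\[
|p_{k+1}(t)-p_k(t)| \;=\; \langle p_{k+1}(t)-p_k(t),\tau_k\rangle \;=\; l_k + t\langle N_{k+1}-N_k,\tau_k\rangle.
\]
Thus the theorem will follow once we show $\langle N_k,\tau_k\rangle = \tan(\theta_k/2)$ and $\langle N_{k+1},\tau_k\rangle = -\tan(\theta_{k+1}/2)$.

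To obtain these two identities, I would unfold the definition \eqref{eq:VertexNormal}, $N_k = (\nu_k+\nu_{k-1})/(1+\cos\theta_k)$, and use $\langle \nu_k,\tau_k\rangle = 0$ (since $\nu_k = R\tau_k$). The nontrivial part is then to compute $\langle \nu_{k-1},\tau_k\rangle$ and $\langle \nu_{k+1},\tau_k\rangle$. Here the defining relation $R_{\sigma\theta_k}(\nu_{k-1})=\nu_k$ (and similarly for $\nu_{k+1}$ via $\theta_{k+1}$) is the key tool: writing $\tau_k = R^{-1}\nu_k$ and applying the rotation gives directly
\[
\langle \nu_{k-1},\tau_k\rangle = \sin\theta_k,\qquad \langle \nu_{k+1},\tau_k\rangle = -\sin\theta_{k+1},
\]
where the opposite sign on the second identity reflects that going from $\nu_k$ to $\nu_{k+1}$ is the same rotation sense as from $\nu_{k-1}$ to $\nu_k$ but we are now projecting the rotated vector instead of its preimage. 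Dividing these by $1+\cos\theta_k$ and $1+\cos\theta_{k+1}$ respectively, and invoking the half-angle identity $\sin\theta/(1+\cos\theta)=\tan(\theta/2)$, yields exactly the two projections claimed.

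Putting it together,
\[
\langle N_{k+1}-N_k,\tau_k\rangle = -\tan\tfrac{\theta_{k+1}}{2}-\tan\tfrac{\theta_k}{2},
\]
so $|p_{k+1}(t)-p_k(t)| = l_k - t\bigl(\tan(\theta_k/2)+\tan(\theta_{k+1}/2)\bigr) = l_k\bigl(1-t\,\kappa(e_k)\bigr)$, as required.

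The main obstacle is really bookkeeping: being careful about the signature $\sigma$ (the sign of the $\pi/2$ rotation $R$) and confirming that the signs of $\langle \nu_{k-1},\tau_k\rangle$ and $\langle \nu_{k+1},\tau_k\rangle$ work out with opposite signs so that the two half-tangents add (rather than cancel) in $\langle N_{k+1}-N_k,\tau_k\rangle$. This cancellation/addition check is where the formula either matches the edge-osculating-circle curvature or misses it by a sign, so I would verify it in coordinates (e.g.\ writing $\tau_k = (\cos\alpha_k,\sin\alpha_k)$ and $\nu_k = (-\sin\alpha_k,\cos\alpha_k)$ with $\alpha_k-\alpha_{k-1}=\sigma\theta_k$) before combining.
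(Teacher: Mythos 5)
Your proposal is correct, and it takes a genuinely different route from the paper's proof. Both arguments hinge on the same key computation, namely
\[
\left\langle \frac{p_{k+1}-p_k}{|p_{k+1}-p_k|},\, N_{k+1}-N_k \right\rangle = -\left( \tan\frac{\theta_k}{2} + \tan\frac{\theta_{k+1}}{2} \right),
\]
and your sign bookkeeping here checks out: with $\nu_k = R\tau_k$ and $R_{\sigma\theta_k}(\nu_{k-1})=\nu_k$ one indeed gets $\langle \nu_{k-1},\tau_k\rangle = \sin\theta_k$ and $\langle \nu_{k+1},\tau_k\rangle = -\sin\theta_{k+1}$ regardless of the choice of $\sigma$, so the two half-tangents add. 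The difference is in how this inner product is converted into a statement about lengths. The paper never invokes the parallel-curve lemma in its proof; instead it additionally computes $|N_{k+1}-N_k|^2 = \left(\tan(\theta_k/2)+\tan(\theta_{k+1}/2)\right)^2$, expands $|p_{k+1}(t)-p_k(t)|^2$ as a quadratic in $t$, and observes that it is a perfect square. You instead actually use the lemma: since $p_{k+1}(t)-p_k(t)$ is orthogonal to $\nu_k$, it is collinear with $\tau_k$, so the length is just the (signed) projection onto $\tau_k$, which is linear in $t$. This saves you the $|N_{k+1}-N_k|^2$ computation entirely and explains \emph{why} the quadratic in the paper had to be a perfect square. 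The only cost is the orientation caveat you already flag --- the projection equals the length only while $1-t\,\kappa(e_k)\ge 0$ --- but the paper's unsigned statement implicitly requires the same restriction (its squared identity only yields $|p_{k+1}(t)-p_k(t)| = |p_{k+1}-p_k|\,|1-t\,\kappa(e_k)|$ in general), so nothing is lost.
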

Before giving the proof, we give an intuitive explanation for a special case.
\begin{figure}[htbp]
	\centering
	\includegraphics[height=3cm]{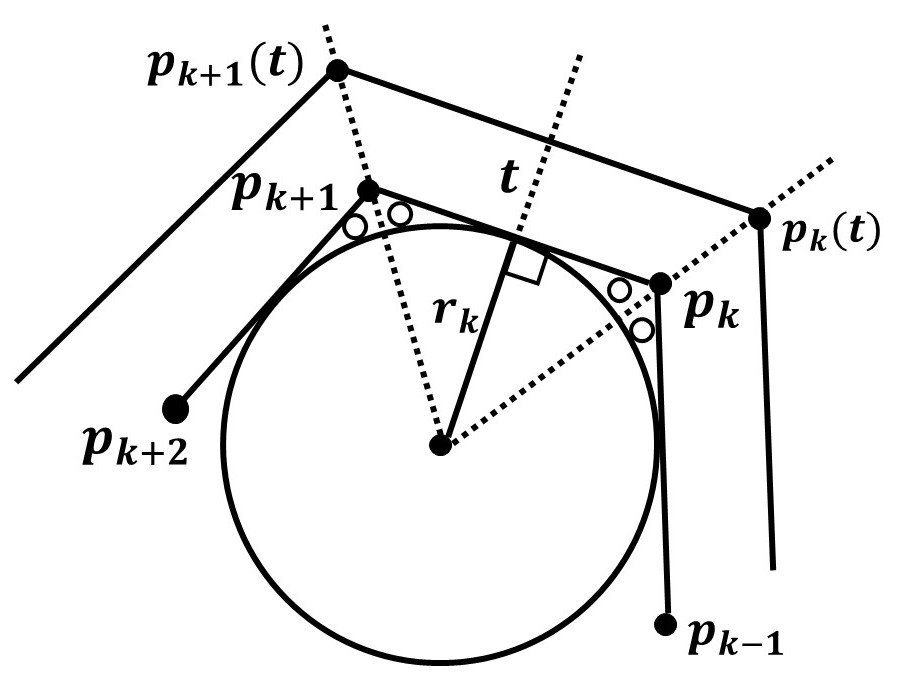}
	\caption{Parallel curves}
	\label{Figure:ParallelCurve}
\end{figure}
In the Figure~\ref{Figure:ParallelCurve}, the similarity ratio of the triangles gives
\begin{align*}
|p_{k+1}(t) - p_k(t)| : |p_{k+1} - p_k| = (-r_k + t) : -r_k 
\iff |p_{k+1}(t) - p_k(t)| = |p_{k+1} - p_k| (1 - t \cdot \kappa(e_k)).
\end{align*}
Note that the signature of the curvature radius $r_k$ is negative in the figure. 
\begin{prf}
By a calculation we have
\begin{align*}
\left\< \frac{p_{k+1} -p_k}{|p_{k+1} -p_k|}, N_{k+1} - N_k \right\>
&= \left\< - R \nu_k, \frac{\nu_k + \nu_{k+1}}{1+\cos \theta_{k+1}} - \frac{\nu_{k-1} + \nu_k}{1 + \cos \theta_k} \right\> \\
&= - \left( \frac{\sin \theta_{k+1}}{1+\cos \theta_{k+1}} + \frac{\sin \theta_k}{1+\cos \theta_k} \right)
= - \left( \tan \frac{\theta_{k+1}}{2} + \tan \frac{\theta_k}{2} \right),
\end{align*}
\begin{align*}
|N_{k+1} - N_k|^2
&= \frac{1}{\cos^2 (\theta_{k+1}/2)} + \frac{1}{\cos^2 (\theta_k/2)} -2 \< N_k, N_{k+1} \> \\
&= 2 + \tan^2 \frac{\theta_{k+1}}{2} + \tan^2 \frac{\theta_k}{2}
	-2 \cdot \frac{1 + \cos \theta_k + \cos \theta_{k+1} + \cos (\theta_k + \theta_{k+1})}{(1 + \cos \theta_k) (1 + \cos \theta_{k+1})} \\
&= \left( \tan \frac{\theta_k}{2} + \tan \frac{\theta_{k+1}}{2} \right)^2.
\end{align*}
Therefore we conclude
\begin{align*}
&\qquad |p_{k+1}(t) - p_k(t)|^2 \\
&= |p_{k+1} - p_k|^2 + 2t \< p_{k+1} -p_k, N_{k+1} -N_k  \> + t^2 |N_{k+1} - N_k|^2 \\
&= |p_{k+1} - p_k|^2 - 2t |p_{k+1} - p_k| \left( \tan \frac{\theta_k}{2} + \tan \frac{\theta_{k+1}}{2} \right)
	+ t^2 \left( \tan \frac{\theta_k}{2} + \tan \frac{\theta_{k+1}}{2} \right)^2 \\
&= |p_{k+1} - p_k|^2 \left( 1 - t \cdot \frac{\tan (\theta_k/2) + \tan (\theta_{k+1}/2)}{|p_{k+1} - p_k|} \right)^2.
\end{align*}
\qed
\end{prf}
\begin{rmk}
This formula itself is already appeared some papers to {\em find} the discrete curvature at the vertices by using the offsets, see \cite{BaPoWa2010}, \cite{CrWa2017}.
If we take the offset of the edges, then we have the following (at least) three possiblities.
\begin{figure}[htbp]
\begin{tabular}{c}
	\begin{minipage}{0.33\hsize}
		\centering
		\includegraphics[height=2cm]{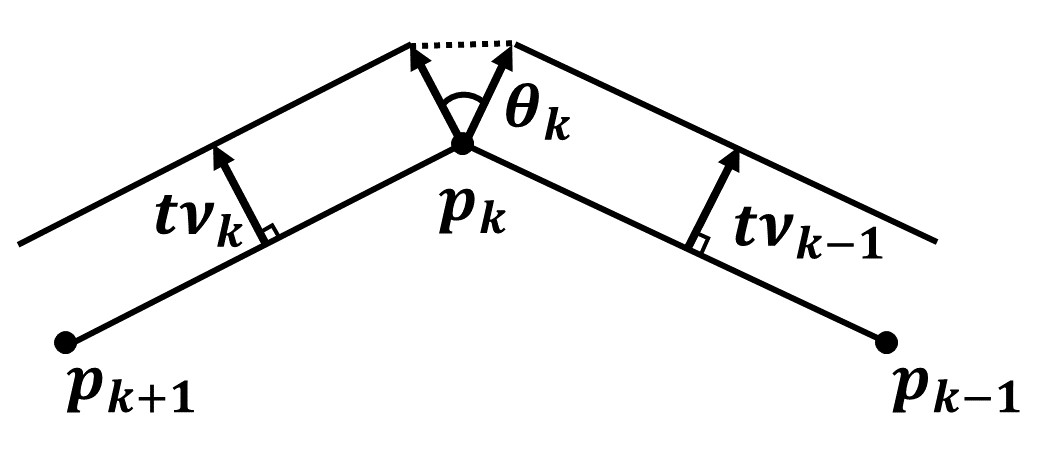}
		\subcaption{$\Gamma_{h, t}^{(1)}$, connect by a segment}
	\end{minipage}
	\begin{minipage}{0.33\hsize}
		\centering
		\includegraphics[height=2cm]{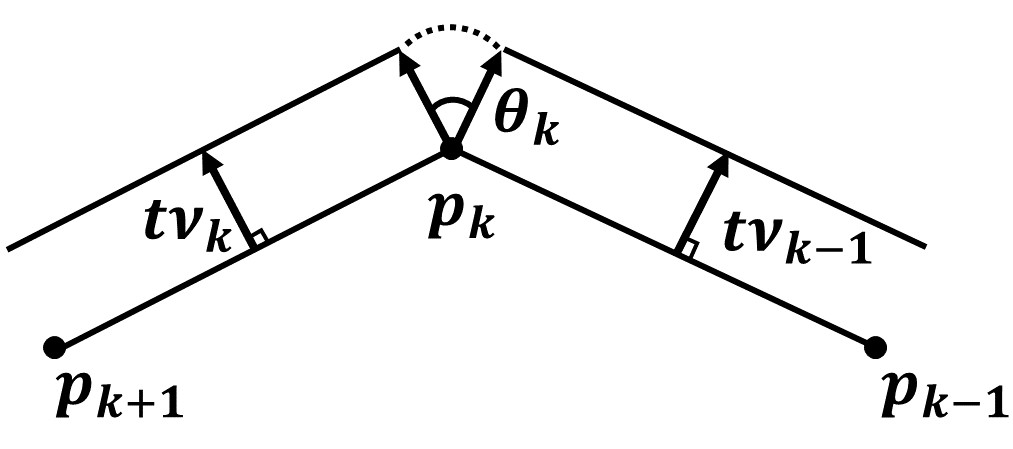}
		\subcaption{$\Gamma_{h, t}^{(2)}$, connect by an arc}
	\end{minipage}
	\begin{minipage}{0.33\hsize}
		\centering
		\includegraphics[height=2cm]{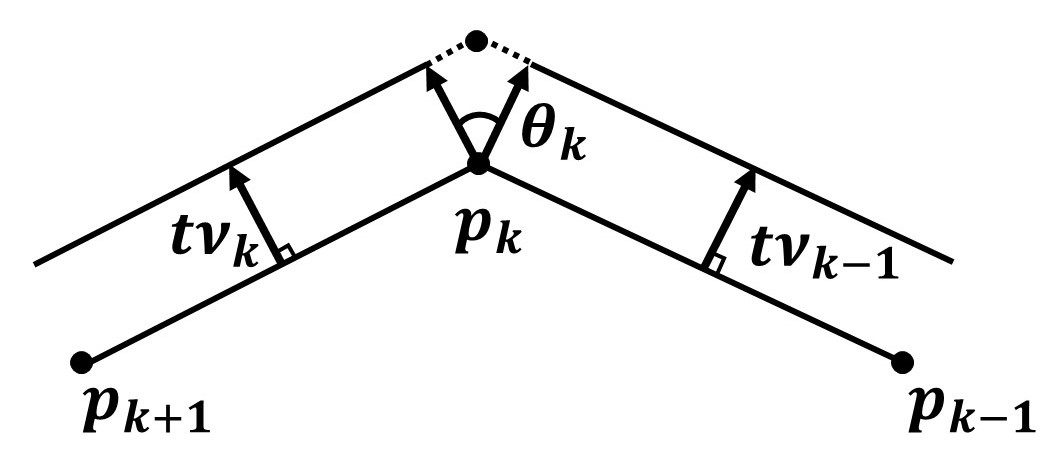}
		\subcaption{$\Gamma_{h, t}^{(3)}$, connect by a wedge}
	\end{minipage}
\end{tabular}
\caption{Three possibilities to construct a new curve}
\end{figure}
By computing the length of dotted curves in the figure, we can write the total length of each offsets as follows:
\begin{align*}
\Length (\Gamma_{h, t}^{(1)}) 
&= \Length (\Gamma_h) - t \sum_k 2 \sin \frac{\theta_k}{2}, \quad
\Length(\Gamma_{h, t}^{(2)}) 
= \Length(\Gamma_h) - t \sum_k \theta_k, \\
\Length (\Gamma_{h, t}^{(3)}) 
&= \Length(\Gamma_h) - t \sum_k 2 \tan \frac{\theta_k}{2}.
\end{align*}
Note the signature of the angles.
The second curve $\Gamma_h^{(2)}$ is nothing but the normal cone method (or the boundary of the Minkowski sum with the disk) known in the convex geometry.
However, the only possible way to unchange the number of the vertices during the offset procedure is the third one, and by modifying the third formula gives our Steiner-type formula:
\begin{align*}
\Length(\Gamma_{h, t}^{(3)})
&= \sum_k (l_k - 2t \tan (\theta_k / 2)) \\
&= \sum_k (l_k - t (\tan (\theta_k / 2) + \tan (\theta_{k+1} /2)))
= \sum_k l_k (1 - t \kappa(e_k)),
\end{align*}
where we put $\kappa(e_k) = (\tan (\theta_k / 2) + \tan (\theta_{k+1} /2)) / l_k$.
\end{rmk}
\begin{rmk}[Discrete Frenet-Serret formula]
In \S \ref{sec:FirstVariation}, we remarked that the relation
\[
\wt{N}_k = \frac{\,1\,}{L_k} (t_k - t_{k-1}), \quad t_k = \frac{p_{k+1} - p_k}{|p_{k+1} - p_k|} 
\]
can be considered as a part of the Frenet-Serret formula.
On the other hand, by some calculation we have
\[
\frac{1}{|p_{k+1} - p_k|} (N_{k+1} - N_k)
= - \frac{\tan (\theta_k/2) + \tan (\theta_{k+1}/2)}{|p_{k+1} - p_k|} t_k
= - \kappa (e_k) t_k.
\]
Note that the former formula is the formula on the vertex $p_k$ while the latter formula is the formula on the edge $[p_k, p_{k+1}]$.
\end{rmk}


\section{The second variation formula}
\label{sec:SecondVariationCurves}

In this section we consider the second variation formula of the length functional.
We will follow the argument developed in \cite{PoRo2002}.

Let $\Gamma_h = \{ p_k \}_k$ be an equilibrium closed curve for the functional $L + \kappa \Vol$.
We say a variation is {\em admissible} (or {\em permissible}) if the variation is volume-preserving and fixes the boundary.
We recall the first variation formula of the length and the $2$-dimensional volume:
\begin{align*}
\frac{d}{dt} L &= \sum_k \< \nabla_{p_k} L, p_k' \>, 
	\quad \nabla_{p_k} L = R ( \nu_k - \nu_{k-1} ), \\
\frac{d}{dt} \Vol &= \sum_k \< \nabla_{p_k} \Vol, p_k' \>, \quad \nabla_{p_k} \Vol = \frac{1}{2} R(p_{k+1} - p_{k-1}).
\end{align*}
Note that if the variation is admissible, then we have
\[
0 = \delta^2 \Vol = \sum_k \< \delta (\nabla_{p_k} \Vol), \delta p_k \> + \< \nabla_{p_k} \Vol, \delta^2 p_k \>.
\]
\begin{lem}
Let $\Gamma_h = \{ p_k \}_k$ be an equilibrium closed curve for the functional $L + \kappa \Vol$ and $p_k(t) = p_k + t v_k + O(t^2)$ be an admissible variation.
Then we have
\[
\delta^2 L 
:= \frac{d^2}{d t^2}_{|t=0} L
= \sum_k \< \delta (\nabla_{p_k} L + \kappa \nabla_{p_k} \Vol), v_k \>.
\]
\end{lem}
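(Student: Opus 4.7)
The plan is to start from the first variation formula for $L$, differentiate once more with respect to $t$, and then use the two pieces of information at our disposal: the Euler--Lagrange equation that $\Gamma_h$ satisfies (since it is an equilibrium for $L+\kappa\Vol$) and the volume-preserving condition applied at second order. These two inputs combine to convert a residual term involving the accelerations $\delta^2 p_k$ into one involving $\delta(\nabla_{p_k}\Vol)$, which is exactly what is needed.

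Concretely, I would first write $p_k(t) = p_k + t v_k + \tfrac{1}{2} t^2 a_k + O(t^3)$, so that $\delta p_k = v_k$ and $\delta^2 p_k = a_k$. Differentiating the first variation identity $\frac{d}{dt} L(t) = \sum_k \langle \nabla_{p_k(t)} L, p_k'(t)\rangle$ once more and evaluating at $t=0$ gives, by the product rule,
\[
\delta^2 L \;=\; \sum_k \langle \delta(\nabla_{p_k} L), v_k\rangle \;+\; \sum_k \langle \nabla_{p_k} L, \delta^2 p_k\rangle.
\]
The first sum already has the right form; the task is to rewrite the second sum.

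Next, because $\Gamma_h$ is an equilibrium for $L+\kappa\Vol$, the Euler--Lagrange relation $\nabla_{p_k} L = -\kappa\,\nabla_{p_k}\Vol$ holds at each interior vertex. Substituting this into the second sum gives
\[
\sum_k \langle \nabla_{p_k} L, \delta^2 p_k\rangle \;=\; -\kappa \sum_k \langle \nabla_{p_k}\Vol, \delta^2 p_k\rangle.
\]
Now invoke the admissibility of the variation: volume preservation through second order means $\delta^2 \Vol = 0$, which by the same product-rule computation (already recorded just before the statement) reads
\[
0 \;=\; \sum_k \langle \delta(\nabla_{p_k}\Vol), v_k\rangle \;+\; \sum_k \langle \nabla_{p_k}\Vol, \delta^2 p_k\rangle.
\]
Solving for the $\delta^2 p_k$ sum and feeding it back gives
\[
\sum_k \langle \nabla_{p_k} L, \delta^2 p_k\rangle \;=\; \kappa \sum_k \langle \delta(\nabla_{p_k}\Vol), v_k\rangle,
\]
and adding this to the first sum yields the claimed formula
\[
\delta^2 L \;=\; \sum_k \langle \delta(\nabla_{p_k} L + \kappa\,\nabla_{p_k}\Vol), v_k\rangle.
\]

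There is no real obstacle here; the only thing to be careful about is keeping the acceleration term $\delta^2 p_k$ honestly in the bookkeeping rather than dropping it, since it is precisely that term which exchanges $L$-data for $\Vol$-data via the equilibrium condition and the volume constraint. The boundary-fixing part of admissibility is used only implicitly, to ensure that all sums are over vertices where $v_k$ is unconstrained and no boundary correction terms appear in the discrete integration-by-parts.
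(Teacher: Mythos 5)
Your proof is correct and follows exactly the route the paper intends: the paper omits an explicit proof but records the identity $0=\delta^2\Vol=\sum_k\<\delta(\nabla_{p_k}\Vol),\delta p_k\>+\<\nabla_{p_k}\Vol,\delta^2 p_k\>$ immediately before the lemma precisely so that, combined with the equilibrium relation $\nabla_{p_k}L=-\kappa\nabla_{p_k}\Vol$, the acceleration term can be traded for $\kappa\sum_k\<\delta(\nabla_{p_k}\Vol),v_k\>$ as you do. No gaps.
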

\begin{defn}[Stability of discrete curves]
Let $\Gamma_h = \{ p_k \}_k$ be a closed equilibrium curve for the functional $L + \kappa \Vol$.
Then $\Gamma_h$ is said to be {\em stable} if  $\delta^2 L \geq 0$ for any admissible variation.
\end{defn}
We introduce the matrix $Q^L$ and $Q^V$ as follows:
\[
{}^t \vec{v} Q^L \vec{v} = \sum_k \< \delta \nabla_{p_k} L, v_k \>, \quad
{}^t \vec{v} Q^V \vec{v} = \sum_k \< \delta \nabla_{p_k} \Vol, v_k \>.
\]
Then we can write $\delta^2 L = {}^t \vec{v} (Q^L + \kappa Q^V) \vec{v}$.
\begin{lem}
${}^t \vec{v} Q^V \vec{v} = \sum_k \< v_k, R v_{k+1} \>$.
\end{lem}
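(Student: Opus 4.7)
The plan is essentially a direct computation exploiting the bilinearity of the quadratic form together with two elementary structural facts: the explicit formula for $\nabla_{p_k}\Vol$ given in the previous section, and the skew-symmetry of the rotation $R$ (i.e.\ $\langle R u, w\rangle = -\langle u, R w\rangle$, equivalent to $R^{\mathsf T}=-R$ for a $\pm\pi/2$-rotation in $\R^2$).

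First I would differentiate the identity $\nabla_{p_k}\Vol=\tfrac{1}{2}R(p_{k+1}-p_{k-1})$ along the variation $p_k(t)=p_k+tv_k+O(t^2)$, obtaining $\delta(\nabla_{p_k}\Vol)=\tfrac{1}{2}R(v_{k+1}-v_{k-1})$ because $R$ is a fixed linear map. Substituting into the definition of $Q^V$ gives
\[
{}^t\vec{v}\,Q^V\,\vec{v}
= \tfrac{1}{2}\sum_k\langle R v_{k+1},v_k\rangle
  -\tfrac{1}{2}\sum_k\langle R v_{k-1},v_k\rangle.
\]

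Next, because the curve is closed, I would re-index the second sum by $j=k-1$, which turns it into $\tfrac{1}{2}\sum_j\langle R v_j,v_{j+1}\rangle$ without boundary correction. Then I would apply skew-symmetry of $R$ to rewrite $\langle R v_j,v_{j+1}\rangle=-\langle v_j,R v_{j+1}\rangle$ and, in the first sum, $\langle R v_{k+1},v_k\rangle=\langle v_k,R v_{k+1}\rangle$ by symmetry of the Euclidean inner product. Adding the two halves yields
\[
{}^t\vec{v}\,Q^V\,\vec{v}
= \tfrac{1}{2}\sum_k\langle v_k,R v_{k+1}\rangle
  +\tfrac{1}{2}\sum_k\langle v_k,R v_{k+1}\rangle
= \sum_k\langle v_k,R v_{k+1}\rangle,
\]
which is the claimed identity.

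There is no real obstacle here; the only point to be slightly careful about is that the index shift produces no boundary term, which is legitimate precisely because $\Gamma_h$ is a closed curve (so all indices are taken modulo $n$). One should also remark, for the reader, that the resulting expression is manifestly the bilinear form associated with a cyclic, skew-adjoint shift-plus-rotation operator on $(\R^2)^n$, which is consistent with the antisymmetric nature of the area functional and with its vanishing on constant variations.
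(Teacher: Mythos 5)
Your proposal is correct and follows essentially the same route as the paper: differentiate the explicit formula $\nabla_{p_k}\Vol=\tfrac{1}{2}R(p_{k+1}-p_{k-1})$ to get $\delta(\nabla_{p_k}\Vol)=\tfrac{1}{2}R(v_{k+1}-v_{k-1})$, then combine the two resulting sums via a cyclic index shift and the skew-symmetry of $R$. The paper compresses this into a single displayed computation, while you spell out the reindexing and the absence of boundary terms; the content is identical.
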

\begin{prf}
The proof follows from the direct computation:
\begin{align*}
{}^t \vec{v} Q^V \vec{v}
&= \sum_k \< \delta \nabla_{p_k} \Vol , v_k \>
= \frac{1}{2} \sum_k \< \delta R(p_{k+1} - p_{k-1}), v_k \> \\
&= \frac{1}{2} \sum_k \< R(v_{k+1} - v_{k-1}), v_k \>
= \sum_k \< v_k, R v_{k+1} \>.
\end{align*}
\qed
\end{prf}
\begin{prop}[Second variation formula for the length functional]
\[
Q ^L
= \sum_k \frac{1}{l_k} (|v_{k+1} - v_k|^2 - \< v_{k+1} - v_k, R \nu_k \>^2)
= \sum_k ( |\nabla v_k|^2 - \< \nabla v_k, R \nu_k \>^2) l_k,
\]
therefore we have the following second variation formula for the length functional:
\begin{equation}
\label{eq:2ndvarcurve}
\delta^2 L
= \sum_k ( |\nabla v_k|^2 - \< \nabla v_k, R \nu_k \>^2) l_k + \kappa \< v_k, R v_{k+1} \>.
\end{equation}
\end{prop}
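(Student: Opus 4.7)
\begin{prf}
The plan is to differentiate the first-variation expression $\nabla_{p_k} L = -t_k + t_{k-1}$, where $t_k := (p_{k+1}-p_k)/l_k$ is the unit tangent, rearrange the resulting sum by a discrete integration by parts, and then exploit the orthonormality of $\{t_k, \nu_k\}$ to put the answer in the form stated. The volume contribution $Q^V$ has already been computed in the preceding lemma, so it remains only to identify $Q^L$.

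First I compute $\delta t_k$. Writing $e_k := p_{k+1}-p_k$ and $l_k = |e_k|$, one gets
\[
\delta e_k = v_{k+1}-v_k, \qquad \delta l_k = \langle t_k, v_{k+1}-v_k \rangle,
\]
so
\[
\delta t_k = \frac{v_{k+1}-v_k}{l_k} - \frac{\langle t_k, v_{k+1}-v_k\rangle}{l_k}\, t_k = \frac{\langle v_{k+1}-v_k, \nu_k\rangle}{l_k}\,\nu_k,
\]
using the decomposition $w = \langle w, t_k\rangle t_k + \langle w, \nu_k\rangle \nu_k$ in the orthonormal frame $\{t_k, \nu_k\}$. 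The crucial point is that the variation of the unit tangent is purely normal; this is the step where the geometry of the problem really enters and where I expect most of the technical work to concentrate.

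Next I assemble $Q^L$. Writing $\delta\nabla_{p_k} L = -\delta t_k + \delta t_{k-1}$ and reindexing the shifted sum, I obtain
\[
{}^t\vec v\, Q^L \vec v = \sum_k \langle \delta\nabla_{p_k} L, v_k\rangle = \sum_k \langle \delta t_k, v_{k+1}-v_k\rangle = \sum_k \frac{\langle v_{k+1}-v_k, \nu_k\rangle^2}{l_k}.
\]
To put this in the form of the proposition, I use that $R$ is a $\pm\pi/2$-rotation, so $R\nu_k = -t_k$, and therefore $\langle w, R\nu_k\rangle^2 = \langle w, t_k\rangle^2$. The Pythagorean identity in the frame $\{t_k, \nu_k\}$ then gives
\[
\langle v_{k+1}-v_k, \nu_k\rangle^2 = |v_{k+1}-v_k|^2 - \langle v_{k+1}-v_k, R\nu_k\rangle^2,
\]
which yields the first displayed expression for $Q^L$. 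The second expression follows immediately from $\nabla v_k = (v_{k+1}-v_k)/l_k$, since dividing by $l_k$ inside the sum and multiplying by $l_k$ outside converts $|v_{k+1}-v_k|^2/l_k$ into $l_k |\nabla v_k|^2$ and likewise for the $R\nu_k$-projection.

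Finally, combining with ${}^t\vec v\, Q^V \vec v = \sum_k \langle v_k, R v_{k+1}\rangle$ from the previous lemma and $\delta^2 L = {}^t\vec v\,(Q^L + \kappa Q^V)\,\vec v$ produces the claimed formula \eqref{eq:2ndvarcurve}. The only subtlety is bookkeeping in the index shift of the second lemma; there are no delicate cancellations beyond that.
\qed
\end{prf}
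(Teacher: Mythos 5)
Your proposal is correct and follows essentially the same route as the paper: the paper computes $l_k\,\delta\nu_k = R(v_{k+1}-v_k) - \langle R(v_{k+1}-v_k),\nu_k\rangle\nu_k$ and reindexes to get $Q^L = \sum_k\langle\delta\nu_k, R(v_{k+1}-v_k)\rangle$, which is the rotation by $R$ of your computation with $\delta t_k$. The identification of the purely normal (resp.\ tangential) part of $\delta t_k$ (resp.\ $\delta\nu_k$), the summation by parts, and the Pythagorean rewriting are the same in both arguments.
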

\begin{prf}
By using the fact $l_k \delta \nu_k = R(v_{k+1} - v_k) - \< R(v_{k+1} - v_k), \nu_k \> \nu_k$, we have
\begin{align*}
Q^L
&= \sum_k \< \delta \nu_k, R(v_{k+1} - v_k) \> 
= \sum_k \< R(\nabla v_k) - \< R(\nabla v_k), \nu_k \> \nu_k, R(\nabla v_k) \> l_k \\
&= \sum_k ( |\nabla v_k|^2 - \< R(\nabla v_k), \nu_k \>^2) l_k
= \sum_k ( |\nabla v_k|^2 - \< \nabla v_k, R \nu_k \>^2) l_k.
\end{align*}
\qed
\end{prf}
In the section of the Steiner-type formula, we used the vector
\[
N_k := \frac{R(\nu_{k-1} - \nu_k)}{\sin \theta_k} 
= \frac{1}{1 + \cos \theta_k} (\nu_k + \nu_{k-1})
\]
as the ``normal vector'' at the vertex $p_k$.
If we define the ``tangent vector'' $T_k$ as $T_k = -RN_k$, then we can decompose the variation vector $v_k$ as
\[
v_k = \psi_k N_k + \eta_k T_k = \psi_k N_k - \eta_k RN_k,
\]
where $\psi, \eta : V \to \R$ is some functions on the vertices.
If $\eta_k = 0$ for all $k$, we call the variation the {\em normal variation}.
In the following we will use this notation.
\begin{lem}
\label{lem:AreaPreserving}
The first variation formula of the volume can be written as
\[
\delta \Vol
= \frac{1}{2} \sum_k \left( \psi_k (l_k + l_{k-1}) + \eta_k (l_k - l_{k-1}) \tan \frac{\theta_k}{2} \right).
\]
In particular, if the curve $\Gamma_h  = \{ p_k \}_k$ satisfies $l_k \equiv l_0$, then a variation $p_k(t) = p_k + t (\psi_k N_k + \eta_k T_k) + O(t^2)$ is volume-preserving if $\sum_k \psi_k = 0$.
\end{lem}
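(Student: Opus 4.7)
The plan is to reduce everything to the edge-based expression of the volume gradient and then expand the decomposition $v_k=\psi_k N_k+\eta_k T_k$ against that gradient. Recall from the ``no free lunch'' remark that $\nabla_{p_k}\Vol=\tfrac12 R(p_{k+1}-p_{k-1})=\tfrac12(l_k\nu_k+l_{k-1}\nu_{k-1})$, since $R(p_{j+1}-p_j)=l_j\nu_j$ by the definition of $\nu_j$. Using $T_k=-RN_k$ and the formula $N_k=(\nu_k+\nu_{k-1})/(1+\cos\theta_k)$ from \eqref{eq:VertexNormal}, everything reduces to evaluating the two inner products $\langle l_k\nu_k+l_{k-1}\nu_{k-1},\,\nu_k+\nu_{k-1}\rangle$ and $\langle l_k\nu_k+l_{k-1}\nu_{k-1},\,R(\nu_k+\nu_{k-1})\rangle$, after which one divides by $1+\cos\theta_k$.

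For the first pairing I would expand directly, using $|\nu_j|=1$ and $\langle\nu_k,\nu_{k-1}\rangle=\cos\theta_k$, to get $(l_k+l_{k-1})(1+\cos\theta_k)$; the factors of $1+\cos\theta_k$ cancel and leave the $\psi_k(l_k+l_{k-1})$ contribution. For the second pairing I would use the defining rotation relation $R_{\sigma\theta_k}\nu_{k-1}=\nu_k$ to write $\nu_k=\cos\theta_k\,\nu_{k-1}+\sin\theta_k\,R\nu_{k-1}$ (valid for either choice of $\sigma$, since $R$ is fixed throughout the curve), giving $\langle\nu_k,R\nu_{k-1}\rangle=\sin\theta_k$ and, via $R^2=-I$, $\langle\nu_{k-1},R\nu_k\rangle=-\sin\theta_k$. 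The cross terms combine to $(l_k-l_{k-1})\sin\theta_k$, and dividing by $1+\cos\theta_k$ produces the half-angle factor $(l_k-l_{k-1})\tan(\theta_k/2)$, which is the $\eta_k$ coefficient. Summing over $k$ gives the claimed first variation formula.

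The ``in particular'' clause is then immediate: under $l_k\equiv l_0$ the $\eta_k$-contribution vanishes because $l_k-l_{k-1}=0$, and the displayed formula collapses to $\delta\Vol=l_0\sum_k\psi_k$, so admissibility of the variation is equivalent to $\sum_k\psi_k=0$.

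The only real obstacle is bookkeeping of signs in the rotation identities, in particular making sure the half-angle identity $\sin\theta_k/(1+\cos\theta_k)=\tan(\theta_k/2)$ is applied with the correct sign of $\theta_k$ fixed by the rotation convention $R_{\sigma\theta_k}\nu_{k-1}=\nu_k$; once that is nailed down the computation is mechanical.
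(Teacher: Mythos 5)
Your computation is correct and takes essentially the same route as the paper's proof: both expand $\delta \Vol = \tfrac12\sum_k \langle R(p_{k+1}-p_{k-1}),\, \psi_k N_k + \eta_k T_k\rangle$ using $R(p_{j+1}-p_j)=l_j\nu_j$ together with the rotation identities $\langle\nu_k,R\nu_{k-1}\rangle=-\langle\nu_{k-1},R\nu_k\rangle=\sin\theta_k$, the only cosmetic difference being that the paper evaluates the $\psi_k$-terms via the form $N_k=R(\nu_{k-1}-\nu_k)/\sin\theta_k$ while you use $(\nu_k+\nu_{k-1})/(1+\cos\theta_k)$ throughout. The overall sign of the $\eta_k$-coefficient depends on the convention fixed by $\sigma$ exactly as in the paper, and that term vanishes in the case $l_k\equiv l_0$ anyway, so your argument is sound.
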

\begin{prf}
Using the first variation formula, we have
\begin{align*}
\delta \Vol
&= \frac{1}{2} \sum_k \< R(p_{k+1} - p_{k-1}), v_k \>
= \frac{1}{2} \sum_k \< R(p_{k+1} - p_{k-1}), \psi_k N_k + \eta_k T_k \> \\
&= \frac{1}{2} \sum_k 
	\left[ \psi_k l_k \frac{\< \nu_k, R \nu_{k-1} \>}{\sin \theta_k} 
		+ \psi_k l_{k-1} \frac{\< \nu_{k-1}, -R \nu_k \>}{\sin \theta_k}
		+ \frac{\eta_k}{\sin \theta_k} (l_k - l_{k-1}) (1- \cos \theta_k) \right] \\
&= \frac{1}{2} \sum_k \left( \psi_k (l_k + l_{k-1}) 
	+ \eta_k (l_k - l_{k-1}) \tan \frac{\theta_k}{2} \right).
\end{align*}
\qed
\end{prf}
\begin{rmk}
For a function $\psi_k$ satisfying $\sum_k \psi_k = 0$, we can find a variation whose variation vector field is $\psi_k N_k$.
The proof is completely the same as in \cite{BadoC1984}.
\end{rmk}
Recall that if $\{ p_k \}_k$ is an equilibrium curve of the functional $L + \kappa \Vol$, then we have $l_k \equiv l_0$, $\theta_k \equiv \theta_0$ and $\kappa l_0 = 2 \tan (\theta_0/2)$.
\begin{lem}
\[
|v_{k+1} - v_k|^2 - \< v_{k+1} - v_k, R \nu_k \>^2 
= [(\psi_{k+1} - \psi_k) + \tan (\theta_0/2) (\eta_{k+1} + \eta_k)]^2.
\]
Therefore we have
\[
|\nabla v_k|^2 - \< \nabla v_k, R \nu_k \>^2
= \left( \nabla \psi_k + \frac{\kappa}{2} (\eta_k + \eta_{k+1}) \right)^2
\]
\end{lem}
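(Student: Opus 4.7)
The plan is to reduce the quantity $|v_{k+1}-v_k|^2 - \langle v_{k+1}-v_k, R\nu_k\rangle^2$ to a simple inner product and then expand using the decomposition $v_k = \psi_k N_k - \eta_k R N_k$. Since $R$ is the $\pm\pi/2$-rotation, the pair $\{\nu_k, R\nu_k\}$ is an orthonormal basis of $\R^2$. Applying the Pythagorean identity to $v_{k+1}-v_k$ in this basis gives
\[
|v_{k+1}-v_k|^2 - \langle v_{k+1}-v_k, R\nu_k\rangle^2 = \langle v_{k+1}-v_k, \nu_k\rangle^2,
\]
so everything reduces to computing the normal component $\langle v_{k+1}-v_k, \nu_k\rangle$.

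Next I would compute the four scalar products $\langle N_k, \nu_k\rangle$, $\langle N_{k+1}, \nu_k\rangle$, $\langle RN_k, \nu_k\rangle$, $\langle RN_{k+1}, \nu_k\rangle$ directly from the formula $N_j = (\nu_j+\nu_{j-1})/(1+\cos\theta_j)$. Using $\langle \nu_{k-1},\nu_k\rangle = \cos\theta_k$ one immediately obtains $\langle N_k,\nu_k\rangle = \langle N_{k+1},\nu_k\rangle = 1$. For the rotated vectors, the relation $R_{\sigma\theta_k}\nu_{k-1}=\nu_k$ (imposed at the start of \S\ref{sec:RelationDiscreteCurvature}) gives $\langle R\nu_{k-1},\nu_k\rangle = \sigma \sin\theta_k$ and $\langle R\nu_{k+1},\nu_k\rangle = -\sigma\sin\theta_{k+1}$, hence
\[
\langle R N_k,\nu_k\rangle = \sigma\tan(\theta_k/2), \qquad \langle R N_{k+1},\nu_k\rangle = -\sigma\tan(\theta_{k+1}/2).
\]

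With these four inner products in hand, the expansion
\[
\langle v_{k+1}-v_k,\nu_k\rangle = \psi_{k+1}\langle N_{k+1},\nu_k\rangle - \psi_k\langle N_k,\nu_k\rangle - \eta_{k+1}\langle RN_{k+1},\nu_k\rangle + \eta_k\langle RN_k,\nu_k\rangle
\]
yields, after substituting and using the equilibrium condition $\theta_k\equiv\theta_0$,
\[
\langle v_{k+1}-v_k,\nu_k\rangle = (\psi_{k+1}-\psi_k) + \sigma\tan(\theta_0/2)(\eta_{k+1}+\eta_k).
\]
Squaring gives the first identity (with a harmless sign $\sigma^2=1$). For the second identity, divide by $l_k^2 = l_0^2$ on both sides using $\nabla v_k = (v_{k+1}-v_k)/l_k$ and $\nabla\psi_k = (\psi_{k+1}-\psi_k)/l_k$, and finally convert $\tan(\theta_0/2)/l_0 = \kappa/2$ via the equilibrium relation $\kappa l_0 = 2\tan(\theta_0/2)$.

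The computation is essentially routine trigonometry; the only real place to be careful is tracking the sign $\sigma$ coming from the choice of rotation $R = R_{\pm\pi/2}$, and checking that it drops out once everything is squared. I also need to check that $N_k$ and $RN_k = -T_k$ really decompose the variation correctly (they span $\R^2$ unless $\theta_k = \pi$), which is already implicit in the setup of the section.
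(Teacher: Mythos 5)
Your argument is correct in substance and takes a genuinely shorter route than the paper. The paper proves the first identity by expanding $|v_{k+1}-v_k|^2$ and $\< v_{k+1}-v_k, R\nu_k\>^2$ separately in terms of $\psi,\eta$ and the inner products $\<N_k,N_{k+1}\>$, $\<N_k,RN_{k+1}\>$, and then cancelling a dozen cross terms by hand. You instead observe that $\{\nu_k, R\nu_k\}$ is an orthonormal basis of $\R^2$, so that
\[
|v_{k+1}-v_k|^2 - \< v_{k+1}-v_k, R\nu_k\>^2 = \< v_{k+1}-v_k, \nu_k\>^2,
\]
which reduces everything to the single linear quantity $\< v_{k+1}-v_k,\nu_k\>$; the inner products $\<N_k,\nu_k\>=\<N_{k+1},\nu_k\>=1$, $\<RN_k,\nu_k\>=\tan(\theta_k/2)$ and $\<RN_{k+1},\nu_k\>=-\tan(\theta_{k+1}/2)$ then finish it in one line. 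This is cleaner, and it explains structurally why the left-hand side is a perfect square, which is invisible in the paper's computation. It is also fully compatible with the paper, whose proof computes the companion component $\< v_{k+1}-v_k, R\nu_k\> = (\psi_{k+1}+\psi_k)\tan(\theta_0/2)-(\eta_{k+1}-\eta_k)$ by exactly the kind of expansion you use for the $\nu_k$-component.

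One point needs fixing: the $\sigma$ bookkeeping. You write $\<R\nu_{k-1},\nu_k\>=\sigma\sin\theta_k$, which treats $R$ as $R_{\pi/2}$ while keeping $\sigma$ in the relation $\nu_k=R_{\sigma\theta_k}\nu_{k-1}$. In the paper's convention $R=R_{\sigma\pi/2}$, so
\[
\<R\nu_{k-1},\nu_k\> = \<R_{\sigma\pi/2}\nu_{k-1},\,R_{\sigma\theta_k}\nu_{k-1}\> = \cos\bigl(\sigma(\theta_k-\pi/2)\bigr) = \sin\theta_k,
\]
with no $\sigma$: the two occurrences cancel because cosine is even. This matters because your fallback justification, that the $\sigma$ is harmless since $\sigma^2=1$, is not valid as stated: if the normal component really were $(\psi_{k+1}-\psi_k)+\sigma\tan(\theta_0/2)(\eta_{k+1}+\eta_k)$, its square would contain the cross term $2\sigma(\psi_{k+1}-\psi_k)\tan(\theta_0/2)(\eta_{k+1}+\eta_k)$ and the stated identity would fail for $\sigma=-1$. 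Fortunately the $\sigma$ is spurious, so the identity you arrive at is the correct one. The remaining steps, dividing by $l_k^2=l_0^2$ and substituting $\kappa l_0=2\tan(\theta_0/2)$, are fine, as is your closing caveat that $N_k$ and $RN_k$ span $\R^2$ only when $\theta_k\neq\pi$ (guaranteed here by $m/n\neq 1/2$).
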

\begin{prf}
Recall that
\begin{align*}
\<N_k, -RN_{k+1}\> 
&= \tan \frac{\theta_k}{2} + \tan \frac{\theta_{k+1}}{2} 
= 2 \tan \frac{\theta_0}{2}, \\
\< N_k, N_{k+1} \> 
&= 1 - \tan \frac{\theta_k}{2} \tan \frac{\theta_{k+1}}{2} 
= 1- \tan^2 \frac{\theta_0}{2}.
\end{align*}
If we note $|N_k| = 1/\cos (\theta_0/2)$, then
\begin{align*}
&\qquad |v_{k+1} - v_k|^2 \\
&= |\psi_{k+1} N_{k+1} - \psi_k N_k|^2 
	+ 2 \< \psi_{k+1} N_{k+1} -\psi_k N_k, \eta_{k+1} T_{k+1} - \eta_k T_k \>
		+ |\eta_{k+1} T_{k+1} - \eta_k T_k|^2 \\
&= \frac{\psi_{k+1}^2}{\cos^2(\theta_0/2)} -2 \psi_k \psi_{k+1} (1 - \tan^2 (\theta_0/2)) 
	+ \frac{\psi_k^2}{\cos^2 (\theta_0/2)} 
		- 2 \< N_k, RN_{k+1} \> (\psi_{k+1} \eta_k - \psi_k \eta_{k+1})\\
&\qquad + \frac{\eta_{k+1}^2}{\cos^2(\theta_0/2)} -2 \eta_k \eta_{k+1} (1 - \tan^2 (\theta_0/2)) 
	+ \frac{\eta_k^2}{\cos^2 (\theta_0/2)} \\
&= (1+ \tan^2 (\theta_0/2)) (\psi_{k+1}^2 + \psi_k^2 + \eta_{k+1}^2 + \eta_k^2) \\
&\qquad -2(\psi_k \psi_{k+1} + \eta_k \eta_{k+1})(1 -\tan^2 (\theta_0/2)) 
	+ 4 \tan (\theta_0/2) (\eta_k \psi_{k+1} - \eta_{k+1} \psi_k).
\end{align*}
Similarly since
\begin{align*}
&\qquad \< v_{k+1} - v_k, R \nu_k \> \\
&= \< \psi_{k+1} N_{k+1} - \psi_k N_k + \eta_{k+1} T_{k+1} -\eta_k T_k, R \nu_k \> \\
&= \psi_{k+1} \frac{\< \nu_k - \nu_{k+1}, \nu_k \>}{\sin \theta_{k+1}} 
	-\psi_k \frac{\< \nu_{k-1} - \nu_k, \nu_k \>}{\sin \theta_k} 
		- \eta_{k+1} \frac{\< \nu_{k+1} -\nu_k, R\nu_k \>}{\sin \theta_{k+1}}
			+ \eta_k \frac{\< \nu_k - \nu_{k-1}, R \nu_k \>}{\sin \theta_k} \\
&= (\psi_{k+1} + \psi_k) \tan \frac{\theta_0}{2} - (\eta_{k+1} - \eta_k),
\end{align*}
we have
\[
\< v_{k+1} - v_k, R \nu_k \>^2
= (\psi_{k+1} + \psi_k)^2 \tan^2 \frac{\theta_0}{2} - 2 \tan \frac{\theta_0}{2} (\psi_{k+1} + \psi_k) (\eta_{k+1} - \eta_k) + (\eta_{k+1} - \eta_k)^2.
\]
Substracting these factors we have
\begin{align*}
&\qquad |v_{k+1} - v_k|^2 - \< v_{k+1} - v_k, R \nu_k \>^2 \\
&= \psi_{k+1}^2 + \psi_k^2 + \tan^2 \frac{\theta_0}{2} (\eta_{k+1}^2 + \eta_k^2) 
	-2 \psi_k \psi_{k+1} \tan^2 \frac{\theta_0}{2} + 2 \eta_k \eta_{k+1} \\
&\qquad + 2\tan \frac{\theta_0}{2} (2 \eta_k \psi_{k+1} - 2 \eta_{k+1} \psi_k 
		+ \psi_{k+1} \eta_{k+1} - \psi_{k+1} \eta_k + \psi_k \eta_{k+1} - \psi_k \eta_k) \\
&\qquad \qquad  -2 (\psi_k \psi_{k+1} + \eta_k \eta_{k+1}) 
			+ 2 \tan^2 \frac{\theta_0}{2} (\psi_k \psi_{k+1} + \eta_k \eta_{k+1})\\
&= (\psi_{k+1} - \psi_k)^2 + \tan^2 \frac{\theta_0}{2} (\eta_{k+1} + \eta_k)^2 
	+ 2 \tan \frac{\theta_0}{2} (\psi_{k+1} - \psi_k) (\eta_{k+1} + \eta_k) \\
&= [(\psi_{k+1} - \psi_k) + \tan \frac{\theta_0}{2} (\eta_{k+1} + \eta_k) ]^2
\end{align*}
\qed
\end{prf}
\begin{lem}
\[
\< v_k, R v_{k+1} \> 
= - 2 \tan \frac{\theta_0}{2} (\psi_k \psi_{k+1} + \eta_k \eta_{k+1}) 
	- (1-\tan^2 \frac{\theta_2}{2}) (\eta_k \psi_{k+1} - \eta_{k+1} \psi_k).
\]
\end{lem}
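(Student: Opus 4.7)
The proof is a direct bilinear expansion once we express everything in terms of the orthonormal-up-to-scaling pair $N_k$, $RN_k$ and exploit that $R$ is an isometry of $\R^2$ with $R^\top = -R$ (so in particular $R^2 = -I$).

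First, I would rewrite the variation vector using $T_k = -RN_k$ as
\[
v_k = \psi_k N_k - \eta_k R N_k, \qquad
R v_{k+1} = \psi_{k+1} R N_{k+1} - \eta_{k+1} R^2 N_{k+1} = \psi_{k+1} R N_{k+1} + \eta_{k+1} N_{k+1},
\]
and expand bilinearly:
\[
\< v_k, R v_{k+1} \>
= \psi_k\psi_{k+1}\< N_k, RN_{k+1}\>
+ \psi_k \eta_{k+1}\< N_k, N_{k+1}\>
- \eta_k \psi_{k+1}\< RN_k, RN_{k+1}\>
- \eta_k \eta_{k+1}\< RN_k, N_{k+1}\>.
\]

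Next, using that $R$ is an isometry and $R^\top = -R$, I would reduce the last two inner products to the first two via
\[
\< RN_k, RN_{k+1}\> = \< N_k, N_{k+1}\>, \qquad
\< RN_k, N_{k+1}\> = -\< N_k, RN_{k+1}\>.
\]
Now I can invoke the two identities already computed in the proof of the preceding lemma, namely
\[
\< N_k, N_{k+1}\> = 1 - \tan^2\!\tfrac{\theta_0}{2}, \qquad
\< N_k, -RN_{k+1}\> = 2\tan\!\tfrac{\theta_0}{2},
\]
so that $\< N_k, RN_{k+1}\> = -2\tan(\theta_0/2)$ and $\< RN_k, N_{k+1}\> = 2\tan(\theta_0/2)$.

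Substituting these four values and collecting terms yields
\[
\< v_k, R v_{k+1} \>
= -2\tan\tfrac{\theta_0}{2}\,(\psi_k\psi_{k+1} + \eta_k\eta_{k+1})
+ \bigl(1 - \tan^2\tfrac{\theta_0}{2}\bigr)\,(\psi_k\eta_{k+1} - \eta_k\psi_{k+1}),
\]
which is the claimed formula after rewriting the second bracket as $-(\eta_k\psi_{k+1} - \eta_{k+1}\psi_k)$. The computation is entirely mechanical; the only thing to be careful about is the sign coming from $R^\top = -R$ in the off-diagonal $\< RN_k, N_{k+1}\>$ term, which is the sole source of the $\psi\eta$ cross-terms appearing with opposite signs.
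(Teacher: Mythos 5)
Your proposal is correct and follows essentially the same route as the paper: a direct bilinear expansion of $\< \psi_k N_k - \eta_k R N_k,\; \psi_{k+1} R N_{k+1} + \eta_{k+1} N_{k+1} \>$ using the inner products $\< N_k, N_{k+1}\> = 1 - \tan^2(\theta_0/2)$ and $\< N_k, -RN_{k+1}\> = 2\tan(\theta_0/2)$ from the preceding lemma. The only difference is that you spell out the reduction of $\< RN_k, RN_{k+1}\>$ and $\< RN_k, N_{k+1}\>$ via $R^{\top} = -R$, a step the paper performs silently; note also that the $\theta_2$ in the statement is a typo for $\theta_0$, as your computation confirms.
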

\begin{prf}
This is also a simple calculation:
\begin{align*}
\< v_k, R v_{k+1} \>
&= \< \psi_k N_k - \eta_k R N_k, \psi_{k+1} R N_{k+1} + \eta_{k+1} N_{k+1} \> \\
&= (\psi_k \psi_{k+1} + \eta_k \eta_{k+1}) \< N_k, R N_{k+1} \>
	- (\eta_k \psi_{k+1} - \eta_{k+1} \psi_k) \< N_k, N_{k+1} \> \\
&= - 2 \tan \frac{\theta_0}{2} (\psi_k \psi_{k+1} + \eta_k \eta_{k+1})
	- (1- \tan^2 \frac{\theta_0}{2} )(\eta_k \psi_{k+1} - \eta_{k+1} \psi_k).
\end{align*}
\qed
\end{prf}
\begin{thm}[Second variation formula for the length functional]
\label{thm:SecondVariationCurve}
\begin{align*}
\delta^2 L
= \sum_k \left[ |\nabla \psi_k|^2 - \kappa^2 \psi_k \psi_{k+1} 
	+ \tan^2 \frac{\theta_0}{2} (\kappa \nabla\psi_k (\eta_{k+1} + \eta_k) + |\nabla \eta_k|^2) \right] l_0
\end{align*}
In particular, for the normal variation we have
\[
\delta^2 L
= \sum_k (|\nabla \psi_k|^2 - \kappa^2 \psi_k \psi_{k+1}) l_0
= - \sum_k \psi_k (\Delta \psi_k + \kappa^2 \psi_{k+1} ) l_0,
\]
where we use the integration by parts and take the line element at the vertex as $L_k = (l_k + l_{k-1})/2 = l_0$.
\end{thm}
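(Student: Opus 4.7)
The plan is to derive both formulas by direct substitution of the three preceding lemmas into the identity $\delta^2 L = {}^t\vec{v}(Q^L + \kappa Q^V)\vec{v}$, together with the equilibrium relation $2\tan(\theta_0/2) = \kappa l_0$. Since everything is now algebra on the numbers $\psi_k, \eta_k$, the core of the proof will be careful bookkeeping of the quadratic and cross terms.

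First, I would substitute the lemma giving $|v_{k+1}-v_k|^2 - \langle v_{k+1}-v_k, R\nu_k\rangle^2 = [(\psi_{k+1}-\psi_k) + \tan(\theta_0/2)(\eta_{k+1}+\eta_k)]^2$ into the formula $Q^L = \sum_k \frac{1}{l_0}(|v_{k+1}-v_k|^2 - \langle v_{k+1}-v_k,R\nu_k\rangle^2)$ and expand the square. Using $2\tan(\theta_0/2)/l_0 = \kappa$, this produces three pieces: a pure $\psi$-piece $\sum_k|\nabla\psi_k|^2\, l_0$, a cross piece $\kappa\sum_k(\psi_{k+1}-\psi_k)(\eta_{k+1}+\eta_k)$, and a pure $\eta$-piece $\sum_k \tan^2(\theta_0/2)(\eta_{k+1}+\eta_k)^2/l_0$. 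Next I would substitute the lemma for $\langle v_k, Rv_{k+1}\rangle$ into $\kappa Q^V$, producing $-\kappa^2 l_0\sum_k\psi_k\psi_{k+1} - \kappa^2 l_0\sum_k\eta_k\eta_{k+1} - \kappa(1-\tan^2(\theta_0/2))\sum_k(\eta_k\psi_{k+1} - \eta_{k+1}\psi_k)$.

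The $\psi$-quadratic terms combine immediately to give $\sum_k(|\nabla\psi_k|^2 - \kappa^2\psi_k\psi_{k+1})l_0$. For the pure $\eta$-quadratic part, I would expand $(\eta_{k+1}+\eta_k)^2 = \eta_{k+1}^2 + 2\eta_k\eta_{k+1} + \eta_k^2$ and use the identity $\kappa^2 l_0 = 4\tan^2(\theta_0/2)/l_0$ so that $2\tan^2(\theta_0/2)\eta_k\eta_{k+1}/l_0 - \kappa^2 l_0 \eta_k\eta_{k+1} = -2\tan^2(\theta_0/2)\eta_k\eta_{k+1}/l_0$; what remains telescopes into $\sum_k \tan^2(\theta_0/2)(\eta_{k+1}-\eta_k)^2/l_0 = \sum_k\tan^2(\theta_0/2)|\nabla\eta_k|^2 l_0$, matching the theorem.

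The trickiest step is the $\psi\eta$ cross term. Here I would use the closed-curve index shift $\sum_k \psi_{k+1}\eta_{k+1} = \sum_k \psi_k\eta_k$ to rewrite the $Q^L$ contribution as $\kappa\sum_k(\psi_{k+1}\eta_k - \psi_k\eta_{k+1})$. Splitting $1-\tan^2(\theta_0/2) = 1 - \tan^2(\theta_0/2)$ in the $\kappa Q^V$ contribution, the $+1$ piece cancels this exactly, leaving only $\kappa\tan^2(\theta_0/2)\sum_k(\psi_{k+1}\eta_k - \psi_k\eta_{k+1})$; undoing the same index shift (now in reverse) rewrites this as $\kappa\tan^2(\theta_0/2)\sum_k\nabla\psi_k(\eta_{k+1}+\eta_k)\,l_0$, which is precisely the cross term in the claimed formula. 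This cancellation is the main obstacle, and it is only sign- and index-chasing, so I expect no real difficulty beyond the arithmetic.

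Finally, for the normal variation, setting $\eta_k\equiv 0$ collapses the formula to $\sum_k(|\nabla\psi_k|^2 - \kappa^2\psi_k\psi_{k+1})l_0$, and applying the integration-by-parts identity $\sum_k|\nabla\psi_k|^2 l_0 = -\sum_k \psi_k\Delta\psi_k L_k$ from the ``no free lunch'' remark with line element $L_k = (l_k+l_{k-1})/2 = l_0$ yields the stated self-adjoint form $-\sum_k \psi_k(\Delta\psi_k + \kappa^2\psi_{k+1})l_0$.
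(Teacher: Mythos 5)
Your proposal is correct and follows essentially the same route as the paper: substitute the two preceding lemmas into $\delta^2 L = {}^t\vec{v}(Q^L+\kappa Q^V)\vec{v}$, use $\kappa l_0 = 2\tan(\theta_0/2)$ to normalize the coefficients, and dispose of the $\psi\eta$ cross terms via the closed-curve telescoping identity (the paper performs this cancellation pointwise by writing $\eta_k\psi_{k+1}-\eta_{k+1}\psi_k = (\psi_{k+1}-\psi_k)(\eta_{k+1}+\eta_k) - (\psi_{k+1}\eta_{k+1}-\psi_k\eta_k)$ and letting the last term telescope, whereas you shift indices under the sum, but these are the same computation). The treatment of the pure $\eta$ terms and the final integration-by-parts step for the normal variation also match the paper.
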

\begin{prf}
By using the previous lemmas, we have
\begin{align*}
&\qquad (|\nabla v_k|^2 - \< \nabla v_k, R \nu_k \>^2) l_k + \kappa \< v_k, R v_{k+1} \> \\
&= \left( |\nabla \psi_k|^2 + \kappa \nabla \psi_k (\eta_{k+1} + \eta_k) 
	+ \frac{\kappa^2}{4} (\eta_{k+1} + \eta_k)^2 \right) l_0 \\
&\qquad - \kappa^2 l_0 (\psi_k \psi_{k+1} + \eta_k \eta_{k+1})
	- \kappa (1-\tan^2 \frac{\theta_0}{2}) (\eta_k \psi_{k+1} - \eta_{k+1} \psi_k) \\
&= |\nabla \psi_k|^2 l_0 + \kappa \nabla \psi_k (\eta_{k+1} + \eta_k) l_0 
	+ \frac{\kappa^2}{4} (\eta_{k+1} - \eta_k)^2 l_0 - \kappa^2 \psi_k \psi_{k+1} l_0 \\
&\qquad - \kappa (1-\tan^2 \frac{\theta_2}{2}) 
	( (\psi_{k+1} - \psi_k)(\eta_{k+1} + \eta_k) - (\psi_{k+1} \eta_{k+1} - \psi_k \eta_k) ) \\
&= |\nabla \psi_k|^2 l_0 + \kappa \tan^2 \frac{\theta_0}{2} \nabla \psi_k (\eta_{k+1} + \eta_k) l_0 
	+ \frac{\kappa^2}{4} (\eta_{k+1} - \eta_k)^2 l_0 - \kappa^2 \psi_k \psi_{k+1} l_0 \\
&\qquad + \kappa (1 - \tan^2 \frac{\theta_0}{2}) (\psi_{k+1} \eta_{k+1} - \psi_k \eta_k).
\end{align*}
Taking the summation, we have the desired result.
\qed
\end{prf}


\section{Instability of non-convex regular polygons}
\label{sec:InstabilityOfRegularPolygons}

In this section we will prove that non-convex regular polygons and convex regular polygons with multiplicity are unstable.
To prove this, we find a special variation with a help of the following discrete version of Wirtinger's inequality:
\begin{thm}[Discrete Wirtinger's inequality, \cite{FaTaTo1954}]
\label{thm:DiscreteWirtingerInequality}
Let $\psi_0, \ldots, \psi_n$ be $(n+1)$ real numbers such that
\[
\psi_0 = \psi_n, \quad \sum_{k=0}^{n-1} \psi_k = 0.
\]
Then we have
\begin{equation}
\label{eq:2ndVarFormula}
\sum_{k=0}^{n-1} (\psi_{k+1} - \psi_k)^2 \geq 4 \sin^2 \frac{\, \pi \,}{n} \sum_{k=0}^{n-1} \psi_k^2
\end{equation}
and the equality holds if and only if there exist $A, B \in \R$ such that
\[
\psi_k = A \cos \frac{\, 2 \pi k \,}{n} + B \sin \frac{\, 2 \pi k \,}{n}.
\]
\end{thm}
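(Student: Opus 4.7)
The plan is to view $\{\psi_k\}_{k=0}^{n-1}$ as a function on the cyclic group $\Z/n\Z$ (the condition $\psi_0 = \psi_n$ is exactly the periodicity requirement) and apply the discrete Fourier transform on this group. The left-hand side of \eqref{eq:2ndVarFormula} is nothing but the Dirichlet quadratic form $\langle -\Delta\psi, \psi\rangle$ of the standard graph Laplacian on the $n$-cycle, whose eigenvalues are known explicitly. Both the sharp constant $4\sin^2(\pi/n)$ and the form of the equality case should fall out of this spectral decomposition with essentially no additional work.

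Concretely, I would expand $\psi_k = \sum_{j=0}^{n-1} c_j \, e^{2\pi \sqrt{-1}\, j k/n}$, noting that the reality of $\psi_k$ forces $c_{n-j} = \overline{c_j}$. Parseval's identity gives $\sum_k \psi_k^2 = n \sum_j |c_j|^2$, while using $\psi_{k+1} - \psi_k = \sum_j c_j (e^{2\pi \sqrt{-1}\, j/n} - 1) e^{2\pi \sqrt{-1}\, j k/n}$ together with $|e^{2\pi \sqrt{-1}\, j/n} - 1|^2 = 4 \sin^2(\pi j/n)$ yields
\[
\sum_{k=0}^{n-1} (\psi_{k+1} - \psi_k)^2 = n \sum_{j=0}^{n-1} 4 \sin^2 \frac{\pi j}{n} \, |c_j|^2.
\]
The zero-sum condition $\sum_k \psi_k = 0$ is exactly $c_0 = 0$. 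Because $\sin^2(\pi j/n) \geq \sin^2(\pi/n)$ for every $j \in \{1, \dots, n-1\}$, with equality only at $j = 1$ and $j = n-1$ (since $\sin((n-1)\pi/n) = \sin(\pi/n)$ and $t \mapsto \sin^2(\pi t/n)$ is strictly larger on the middle range $2 \leq j \leq n-2$), we can factor out $4 \sin^2(\pi/n)$ from the Fourier side and recover \eqref{eq:2ndVarFormula} via Parseval.

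For the equality case, one needs $c_j = 0$ for every $j \notin \{1, n-1\}$; writing $c_1 = (A - \sqrt{-1}\, B)/2$ and using $c_{n-1} = \overline{c_1}$ collapses the expansion to
\[
\psi_k = c_1 e^{2\pi \sqrt{-1}\, k/n} + \overline{c_1}\, e^{-2\pi \sqrt{-1}\, k/n} = A \cos \frac{2\pi k}{n} + B \sin \frac{2\pi k}{n},
\]
and conversely every such $\psi_k$ clearly satisfies the two hypotheses and saturates \eqref{eq:2ndVarFormula} by direct substitution. There is no real obstacle in this argument; the only bookkeeping points worth checking carefully are the strict inequality $\sin^2(\pi j/n) > \sin^2(\pi/n)$ for $2 \leq j \leq n-2$ (which pins down the equality case) and the mild degenerate case $n = 2$ where $j = 1 = n-1$ and the sine-term drops out, leaving only $\psi_k = A\cos(\pi k)$.
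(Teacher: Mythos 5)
Your argument is correct and complete: the expansion in the characters $e^{2\pi\sqrt{-1}\,jk/n}$ of $\Z/n\Z$, Parseval, the identity $|e^{2\pi\sqrt{-1}\,j/n}-1|^2 = 4\sin^2(\pi j/n)$, the observation that the zero-sum hypothesis kills $c_0$, and the strict monotonicity of $\sin^2(\pi j/n)$ away from $j\in\{1,n-1\}$ together give both the sharp constant and the equality case, and your handling of the reality constraint $c_{n-j}=\overline{c_j}$ and of the degenerate case $n=2$ is right. Note, however, that the paper offers no proof to compare against: it imports the theorem by citation from Fan--Taussky--Todd, whose original treatment is a more elementary quadratic-form argument. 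So what you have written is a self-contained spectral proof where the paper has none, and it is the natural one in context: your computation is exactly the diagonalization of the cycle Laplacian by the vectors $e_j = {}^t(1,\omega^j,\ldots,\omega^{j(n-1)})$ that the paper itself carries out in the Appendix for the circulant matrix $H$, and the inequality is precisely the statement that the smallest Laplacian eigenvalue on the orthogonal complement of $e_n = {}^t(1,\ldots,1)$ is $2-2\cos(2\pi/n) = 4\sin^2(\pi/n)$. The only point worth writing out explicitly if you formalize this is the strict inequality $\sin^2(\pi j/n) > \sin^2(\pi/n)$ for $2\le j\le n-2$, which you already flagged; it follows from $\sin(\pi j/n)=\sin(\pi(n-j)/n)$ together with the strict increase of $\sin$ on $[0,\pi/2]$ applied to $\min(j,n-j)\ge 2$.
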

In the following, we will consider the normal variation, i.e., the variation which have the form:
\[
p_k(t) = p_k + t \psi_k N_k + O(t^2), \quad N_k = (\nu_k + \nu_{k-1})/(1 + \cos \theta_0),
\]
where $\psi_k$ satisfies $\sum_k \psi_k = 0$.
By the second variation formula (Theorem \ref{thm:SecondVariationCurve}) we have
\[
\delta^2 L
= \sum_k (|\nabla \psi_k|^2 - \kappa^2 \psi_k \psi_{k+1}) l_0
= \sum_k \frac{\,1 \,}{l_0} \left[ (\psi_{k+1} - \psi_k)^2 - 4 \psi_k \psi_{k+1} \tan^2 \frac{\, m \pi \,}{n} \right]
\]
for any admissible variations, where we use the relation $\kappa l_0 = 2 \tan (\theta_0/2)$ and put $\theta_0 = 2 m \pi/ n$ for some $m \in \Z$ and assume that $m/n \neq 1/2$.
\begin{thm}[Instability of non-convex regular polygons]
\label{thm:InstabilityOfRegularPolygons}
Let $n \geq 5$.
By taking $\psi_k = A \cos (2 \pi k/n) + B \sin (2 \pi k/n)$, $(A, B) \neq (0, 0)$, we have
\[
\delta^2 \Length
= \frac{\, 4 \,}{l_0} \left[ \sin^2 \frac{\, \pi \,}{n} - \cos \frac{\, 2 \pi \,}{n} \tan^2 \frac{\, m \pi \,}{n} \right] \sum_k \psi_k^2.
\]
In particular, $\delta^2 \Length < 0$ for $2 \leq m \leq n-2$, i.e., non-convex regular polygons are unstable.
\end{thm}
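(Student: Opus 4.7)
The plan is to substitute the specific ansatz $\psi_k = A\cos(2\pi k/n) + B\sin(2\pi k/n)$ into the normal-variation second variation formula derived in Theorem \ref{thm:SecondVariationCurve}, evaluate each sum using orthogonality of trigonometric functions summed over the $n$-th roots of unity, and then analyze the sign of the resulting coefficient. First I would verify admissibility: since $\sum_{k=0}^{n-1} e^{2\pi i k/n}=0$ for $n\ge 2$, both the cosine and sine sums vanish, so $\sum_k \psi_k = 0$ and Lemma \ref{lem:AreaPreserving} guarantees that such a $\psi_k N_k$ is the tangent vector field of a volume-preserving variation.

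Next I would compute the three sums appearing in
\[
\delta^2 L = \frac{1}{l_0}\sum_k \left[(\psi_{k+1}-\psi_k)^2 - 4\tan^2(m\pi/n)\,\psi_k\psi_{k+1}\right].
\]
Writing $\psi_k = \Re\{(A - iB)e^{2\pi i k/n}\}$ and using $\sum_k e^{2\pi i j k/n} = n\,\delta_{j\equiv 0\,(\text{mod }n)}$, I get $\sum_k \psi_k^2 = \tfrac{n}{2}(A^2+B^2)$ and $\sum_k \psi_k\psi_{k+1} = \cos(2\pi/n)\sum_k\psi_k^2$. Since the ansatz is exactly the equality case of the discrete Wirtinger inequality (Theorem \ref{thm:DiscreteWirtingerInequality}), I immediately have $\sum_k(\psi_{k+1}-\psi_k)^2 = 4\sin^2(\pi/n)\sum_k\psi_k^2$. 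Substituting these three identities gives the stated formula
\[
\delta^2 L = \frac{4}{l_0}\left[\sin^2\frac{\pi}{n} - \cos\frac{2\pi}{n}\tan^2\frac{m\pi}{n}\right]\sum_k\psi_k^2.
\]

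Finally I would establish the sign. Because the bracket is symmetric under $m\mapsto n-m$, I may restrict to $2\le m\le n/2$ (with $m/n\ne 1/2$ excluded). For $n\ge 5$ we have $2\pi/n<\pi/2$, hence $\cos(2\pi/n)>0$, and $\tan(m\pi/n)$ is increasing on $[0,\pi/2)$, so it suffices to handle $m=2$. Using $\sin(2\pi/n)=2\sin(\pi/n)\cos(\pi/n)$ and $\cos(2\pi/n)=2\cos^2(\pi/n)-1$, the inequality $\cos(2\pi/n)\tan^2(2\pi/n)>\sin^2(\pi/n)$ reduces to $4\cos^2(\pi/n)>2\cos^2(\pi/n)-1$, which is trivially true. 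This forces $\delta^2 L<0$, proving instability.

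The only potentially delicate point is the sign analysis for $m$ near $n/2$, where $\tan(m\pi/n)$ blows up while $\cos(2\pi/n)$ remains a fixed positive quantity, so the bracket stays very negative; conversely near $m=1$ or $m=n-1$ a short calculation shows the bracket collapses to $\sin^2(\pi/n)\tan^2(\pi/n)\ge 0$, consistent with the convex regular $n$-gon not being detected as unstable by this test variation. Everything else is routine orthogonality, so the main work is really just organizing the trigonometric identities cleanly.
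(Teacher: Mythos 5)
Your proposal is correct and follows essentially the same route as the paper: substitute the Wirtinger-equality ansatz, use trigonometric orthogonality to get $\sum_k \psi_k\psi_{k+1} = \cos(2\pi/n)\sum_k\psi_k^2$ and $\sum_k(\psi_{k+1}-\psi_k)^2 = 4\sin^2(\pi/n)\sum_k\psi_k^2$, and then bound the bracket via $\tan^2(m\pi/n)\ge\tan^2(2\pi/n)$ together with $\cos(2\pi/n)>0$ for $n\ge 5$. The only cosmetic difference is that the paper derives the cross-term identity by introducing $\vp_k=-A\sin(2\pi k/n)+B\cos(2\pi k/n)$ and using $\sum_k\psi_k\vp_k=0$, whereas you work with roots of unity directly; the sign analysis is the same.
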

\begin{proof}
By the discrete Wirtinger's inequality we have
\[
\delta^2 \Length \geq \sum_k \frac{\, 4 \,}{l_0} (\psi_k^2 \sin^2 \frac{\pi}{n} - \psi_k \psi_{k+1} \tan^2 \frac{m \pi}{ n}).
\]
In the following we use the equality condition $\psi_k = A \cos (2 \pi k/n) + B \sin (2 \pi k/n)$ and put $\vp_k = -A \sin (2 \pi k/n) + B \cos (2\pi k/n)$.
Then we have
\[
\psi_{k+1} = \psi_k \cos(2 \pi/n) +\vp_k \sin (2 \pi /n),\quad
\psi_k \vp_k = \frac{\,1\,}{2} (B^2 - A^2) \sin \frac{4 k \pi }{n} + AB \cos \frac{4 k \pi }{n}.
\]
If we note the fact $\sum_k \psi_k \vp_k = 0$, then
\begin{align*}
\delta^2 \Length
&= \sum_k \frac{\, 4 \,}{l_0} \left[ \psi_k^2 \sin^2 \frac{\, \pi \,}{n} - \psi_k \left( \psi_k \cos \frac{2 \pi}{n} +\vp_k \sin \frac{2 \pi}{n} \right) \tan^2 \frac{m \pi}{n} \right] \\
&= \frac{\, 4 \,}{l_0} \left[ \sin^2 \frac{\, \pi \,}{n} - \cos \frac{2 \pi}{n} \tan^2 \frac{m \pi}{n} \right] \sum_k \psi_k^2.
\end{align*}
If $m=1$ or $m=n-1$, then
\[
\delta^2 \Length \geq \frac{\, 4 \,}{l_0} \sin^2 \frac{\, \pi \,}{n} \tan^2 \frac{\, \pi \,}{n} \sum_k \psi_k^2 \geq 0.
\]
On the other hand, for $2 \leq m \leq n-2$ and $(A, B) \neq (0, 0)$ we have
\[
\delta^2 \Length
\leq - \frac{\, 4 \sin^2 (\pi/n) (1 + 2 \cos^2 (\pi/n)) \,}{l_0 \cos (2 \pi/n)} \sum_k \psi_k^2 < 0,
\]
where we use the fact $\tan^2 (m \pi/n) \geq \tan^2 (2 \pi/ n)$ if $2 \leq m \leq n-2$, and $\cos (2\pi /n) > 0$ if $n \geq 5$.
This proves the statement.
\end{proof}


\section{Appendix}

We observe the second variation formula from the analysis of the Jacobi operator.
We can modify the equation \eqref{eq:2ndVarFormula} as follows:
\begin{align*}
\delta^2 L
= \frac{\,1\,}{l_0} \sum_k (- \alpha \psi_{k-1} + 2 \psi_k - \alpha \psi_{k+1}) \psi_k 
= \frac{\,1\,}{l_0} \<H \Psi, \Psi \>,
\end{align*}
where we put $\alpha = 1 + 2 \tan^2 (m\pi/n)$ and
\[
H =
\begin{pmatrix}
2 & -\a & 0 & \cdots & 0 & -\a \\
-\a & 2 & -\a & \cdots & 0 & 0 \\
0 & -\a & 2 & \cdots & 0 & 0 \\
\vdots & \vdots & \vdots & \ddots  & \vdots & \vdots \\
0 & 0 & 0 & \cdots & 2 & -\a \\
-\a & 0 & 0 & \cdots & -\a & 2 \\
\end{pmatrix},\quad
\Psi =
\begin{pmatrix}
\psi_1 \\
\psi_2 \\
\psi_3 \\
\vdots \\
\psi_n
\end{pmatrix}.
\]
Since the matrix $H$ is the circulant matrix, the eigenvalues $\lambda_j$ can be calculated explicitly, see e.g. \cite{Dav1979}:
\[
\lambda_j 
= 2 - 2 \a \cos \frac{2\pi j}{n} 
= \frac{4 \cos^2 (j \pi/n)}{\cos^2 (m\pi/n)} 
	\left( \tan^2 \frac{j \pi}{n} - \sin^2 \frac{m \pi}{n} \right),
		\quad j = 1, \ldots, n.
\]
The corresponding eigenvectors are
\begin{equation}
\label{eq:Eigenvectors}
e_j = {}^t (1, \omega^j, \ldots, \omega^{j(n-1)}) \in \C^n, \quad
\omega = \exp(2 \pi \sqrt{-1}/n).
\end{equation}
The condition $\sum_k \psi_k = 0$ is equivalent to the condition that $\Psi = {}^t (\psi_1, \ldots, \psi_n)$ is perpendicular to $e_n = {}^t (1, \ldots, 1)$.
Therefore we only consider the eigenvalues $\lambda_1, \ldots, \lambda_{n-1}$.
\begin{lem}
For $m \leq j \leq n-m$, we have $\lambda_j > 0$.
In particular, $\lambda_j > 0$ for $1 \leq j \leq n-1$ if $m=1$, i.e., the convex regular polygon case.
\end{lem}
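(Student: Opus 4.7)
The plan is to work with the factored form
\[
\lambda_j = \frac{4\cos^2(j\pi/n)}{\cos^2(m\pi/n)}\Bigl(\tan^2\frac{j\pi}{n} - \sin^2\frac{m\pi}{n}\Bigr)
\]
already displayed in the excerpt. Since $m/n \neq 1/2$, the denominator $\cos^2(m\pi/n)$ is strictly positive, so the sign of $\lambda_j$ is controlled by the bracketed factor multiplied by $\cos^2(j\pi/n)$; the value $j = n/2$ (which only occurs for $n$ even) must be handled separately because the prefactor then vanishes while $\tan^2(j\pi/n)$ blows up.

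First I would observe the palindromic symmetry $\lambda_j = \lambda_{n-j}$, which is immediate from $\cos(2\pi j/n) = \cos(2\pi(n-j)/n)$ in the formula $\lambda_j = 2 - 2\alpha\cos(2\pi j/n)$. This reduces the claim to the range $m \leq j \leq n/2$.

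Next I would split into two cases. When $j = n/2$ (only possible for $n$ even), returning to the unfactored expression gives $\lambda_{n/2} = 2 + 2\alpha = 4 + 4\tan^2(m\pi/n) > 0$. When $m \leq j < n/2$, both $j\pi/n$ and $m\pi/n$ lie in $(0, \pi/2)$, so $\tan^2$ is defined and strictly increasing on this interval, giving $\tan^2(j\pi/n) \geq \tan^2(m\pi/n)$. Since $1 \leq m \leq n-1$ with $m \neq n/2$ forces $0 < \cos^2(m\pi/n) < 1$, one has $\tan^2(m\pi/n) = \sin^2(m\pi/n)/\cos^2(m\pi/n) > \sin^2(m\pi/n)$, so the bracket is strictly positive and $\lambda_j > 0$.

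The specialization $m=1$ is then immediate: the range $m \leq j \leq n-m$ becomes $1 \leq j \leq n-1$, which is exactly the set of indices relevant to admissible variations (we already excluded $j=n$ corresponding to the constraint $\sum_k \psi_k = 0$). No serious obstacle is expected; the argument is a two-case trigonometric monotonicity check. The only subtlety is treating $j = n/2$ separately, because the factored form hides a removable zero-times-infinity there and one must fall back to the original expression $2 - 2\alpha\cos(2\pi j/n)$ to evaluate it.
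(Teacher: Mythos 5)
Your proof is correct and follows essentially the same route as the paper's one-line argument, namely the monotonicity of $\tan^2$ on the relevant range combined with the elementary inequality $\tan^2(m\pi/n) > \sin^2(m\pi/n)$. Your extra care at $j = n/2$ (where the factored formula is a $0\cdot\infty$ indeterminate and one must return to $\lambda_{n/2} = 2 + 2\alpha > 0$) is a point the paper's proof silently skips, so it is a welcome refinement rather than a departure.
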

\begin{prf}
This is a direct calculation:
\[
\tan^2 \frac{j \pi}{n} - \sin^2 \frac{m \pi}{n}
\geq \tan^2 \frac{m \pi}{n} - \sin^2 \frac{m \pi}{n}
= \sin^2 \frac{m \pi}{n} \tan^2 \frac{m \pi}{n} > 0.
\]
\end{prf}
\begin{thm}
\label{thm:InstabilityOfRegularPolygons}
Assume $m \geq  2$.
For $1 \leq j \leq m/2$ or $n - m/2 \leq j \leq n-1$, we have $\lambda_j < 0$.
Therefore, the index of a non-convex regular polygon or a convex polygon with multiplicity is at least $\lfloor m/2 \rfloor$.
\end{thm}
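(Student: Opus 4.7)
The plan is to reduce the theorem to a single trigonometric inequality via the explicit eigenvalue formula
\[
\lambda_j = \frac{4\cos^2(j\pi/n)}{\cos^2(m\pi/n)}\left(\tan^2\frac{j\pi}{n} - \sin^2\frac{m\pi}{n}\right)
\]
obtained just above the statement, and then to count admissible directions on which $\delta^2 L$ is negative. Since $m/n \neq 1/2$ the denominator $\cos^2(m\pi/n)$ is strictly positive, and for $j$ in the range of interest $\cos(j\pi/n)$ will also be nonzero, so $\operatorname{sgn}(\lambda_j)$ equals the sign of $\tan^2(j\pi/n) - \sin^2(m\pi/n)$.

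First I would reduce to the case $2 \le m < n/2$ by the symmetry $m \leftrightarrow n-m$: both $\sin^2(m\pi/n)$ and $\cos^2(m\pi/n)$ are invariant under this substitution, and geometrically $\Gamma_h^{m,n}$ and $\Gamma_h^{n-m,n}$ have the same vertex set and opposite orientation, so the stability question is the same. In this reduced range, $j \le m/2$ forces $j\pi/n < \pi/4$, and in particular $\cos(j\pi/n) > 0$, so the sign reduction of the previous paragraph applies.

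Next I would establish the trigonometric comparison $\tan(j\pi/n) < \sin(m\pi/n)$ for $1 \le j \le \lfloor m/2\rfloor$. Monotonicity of $\tan$ on $[0,\pi/4]$ reduces this to the endpoint inequality $\tan(m\pi/(2n)) < \sin(m\pi/n)$, which, setting $x = m\pi/(2n)$ and using $\sin 2x = 2\sin x\cos x$, collapses to $1 < 2\cos^2 x$, i.e.\ $x < \pi/4$; this is exactly the reduced assumption $m < n/2$. This comparison is the only nontrivial step of the proof; once in hand, it gives $\lambda_j < 0$ throughout $1 \le j \le \lfloor m/2\rfloor$, and the identity $\lambda_j = \lambda_{n-j}$ then yields the symmetric range $n - \lfloor m/2 \rfloor \le j \le n-1$ of the statement.

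Finally I would convert the negativity of these eigenvalues into the index lower bound. Because $H$ is a real symmetric circulant, its real eigenspaces are spanned by $\Re e_j$ and $\Im e_j$, and each eigenvector $e_j$ for $j \ne n$ is orthogonal to $e_n = {}^t(1,\ldots,1)$, so the admissibility constraint $\sum_k\psi_k = 0$ is automatic. Thus each $j \in \{1,\ldots,\lfloor m/2\rfloor\}$ contributes a linearly independent admissible direction with $\delta^2 L < 0$, giving index $\ge \lfloor m/2\rfloor$. The main obstacle is really confined to the trigonometric step in the third paragraph; the spectral and combinatorial parts are routine consequences of the circulant structure of $H$.
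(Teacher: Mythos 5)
Your proof is correct and, at its core, the same as the paper's: the published proof consists of the single chain $\tan^2(j\pi/n)-\sin^2(m\pi/n)\le\tan^2(j\pi/n)-\sin^2(2j\pi/n)<0$, whose second inequality is exactly your observation that $\tan x<\sin 2x$ iff $2\cos^2 x>1$ iff $x<\pi/4$ (the paper applies it at $x=j\pi/n$, you at $x=m\pi/(2n)$, with the monotonicity step transferred from $\sin^2$ to $\tan$), and the bookkeeping you supply --- positivity of the prefactor $\cos^2(j\pi/n)$, the symmetry $\lambda_j=\lambda_{n-j}$, and the count of independent admissible directions via $\Re e_j,\Im e_j\perp e_n$ --- is omitted in the paper but routine. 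The one place you genuinely depart is the opening reduction to $2\le m<n/2$, and this is worth keeping, but you should state its consequence honestly rather than letting the symmetry absorb it: the paper's inequality $\sin^2(m\pi/n)\ge\sin^2(2j\pi/n)$ silently uses that $\sin^2$ is increasing on $[0,m\pi/n]$, i.e.\ $m\le n/2$, and the theorem as literally printed fails for $m>n/2$ (take $n=10$, $m=9$, $j=4\le m/2$: then $\tan^2(2\pi/5)\approx 9.47$ while $\sin^2(9\pi/10)=\sin^2(\pi/10)\approx 0.096$, so $\lambda_4>0$). Because your $m\leftrightarrow n-m$ substitution also changes the range of $j$ appearing in the statement, what your argument actually establishes is $\lambda_j<0$ for $1\le j\le\lfloor\min(m,n-m)/2\rfloor$ and hence index at least $\lfloor\min(m,n-m)/2\rfloor$; this is the correct version of the claim, but it is strictly weaker than the printed $\lfloor m/2\rfloor$ when $m>n/2$, so you should say explicitly that you are proving (and can only prove) the corrected bound.
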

\begin{prf}
Under the above conditions, we have
\[
\tan^2 \frac{j \pi}{n} - \sin^2 \frac{m \pi}{n}
\leq \tan^2 \frac{j \pi}{n} - \sin^2 \frac{2 j \pi}{n} < 0.
\]
\qed
\end{prf}
\begin{rmk}
More precisely, $\lambda_j < 0$ for $j < (n /\pi) \arctan (\sin (m\pi/n))$.
\end{rmk}
\begin{rmk}
From another point of view, a non-convex regular polygon is the high-frequency component of the discrete Fourier expansion of the polygon.
More precisely, any polygon in $\R^2$ with $n$-vertices can be regarded as a point in $\R^{2n} \simeq \C^n$.
Each eigenvector $e_k$ in \eqref{eq:Eigenvectors} corresponds to the regular $n$-gon and $\{ e_1, \ldots, e_n \}$ forms a basis of $\C^n$. 
Therefore, any polygon in $\R^2$ can be written as a linear combination of the regular $n$-gons and this fact  corresponds to the discrete Fourier expansion.
\end{rmk}


\section*{Acknowledgements}
The author would like to experess his gratitude to Professor Miyuki Koiso and Professor Hisashi Naito for invaluable comments and friutful discussions.


\begin{flushleft}
Yoshiki J{\footnotesize IKUMARU} \\
Institute of Mathematics for Industry, Kyushu University  \\
744 Motooka Nishi-ku, Fukuoka 819-0395, Japan \\
E-mail: {\tt y-jikumaru@imi.kyushu-u.ac.jp} \\
 \end{flushleft}
\end{document}